\newtheorem{definition}{Definition}[section]
\newtheorem{proposition}{Proposition}[section]
\newtheorem{lemma}{Lemma}[section]
\newtheorem{theorem}{Theorem}[section]
\newtheorem{remark}{Remark}[section]
\newtheorem{corollary}{Corollary}[section]
\newtheorem*{example1}{Example}
\newtheorem*{question}{Question}
\newtheorem*{notation}{Notation}
\numberwithin{equation}{section}
\numberwithin{subsection}{section}
\begin{document}


\newcommand{\riem}{(M^m, \langle \, , \, \rangle)}
\newcommand{\Hess}{\mathrm{Hess}\, }
\newcommand{\hess}{\mathrm{hess}\, }
\newcommand{\cut}{\mathrm{cut}}
\newcommand{\ind}{\mathrm{ind}}
\newcommand{\ess}{\mathrm{ess}}
\newcommand{\longra}{\longrightarrow}
\newcommand{\eps}{\varepsilon}
\newcommand{\ra}{\rightarrow}
\newcommand{\vol}{\mathrm{vol}}
\newcommand{\di}{\mathrm{d}}
\newcommand{\R}{\mathbb R}
\newcommand{\C}{\mathbb C}
\newcommand{\Z}{\mathbb Z}
\newcommand{\N}{\mathbb N}
\newcommand{\HH}{\mathbb H}
\newcommand{\esse}{\mathbb S}
\newcommand{\metric}{\langle \, , \, \rangle}
\newcommand{\lip}{\mathrm{Lip}}
\newcommand{\loc}{\mathrm{loc}}
\newcommand{\diver}{\mathrm{div}}
\newcommand{\disp}{\displaystyle}
\newcommand{\rad}{\mathrm{rad}}
\newcommand{\mmetric}{\langle\langle \, , \, \rangle\rangle}
\newcommand{\sn}{\mathrm{sn}}
\newcommand{\cn}{\mathrm{cn}}
\newcommand{\ink}{\mathrm{in}}
\newcommand{\hol}{\mathrm{H\ddot{o}l}}
\newcommand{\capac}{\mathrm{cap}}
\newcommand{\bmo}{\{b <0\}}
\newcommand{\bmuo}{\{b \le 0\}}
\newcommand{\Fk}{\mathcal{F}_k}
\newcommand{\dist}{\mathrm{dist}}
\newcommand{\gr}{\mathcal{G}}
\newcommand{\grg}{\mathcal{G}^{(g)}}
\newcommand{\Ricc}{\mathrm{Ricc}}
\newcommand{\foc}{\mathrm{foc}}
\newcommand{\F}{\mathcal{F}}
\newcommand{\Cf}{\mathcal{C}_f}
\newcommand{\cutf}{\mathrm{cut}_{f}}
\newcommand{\Cn}{\mathcal{C}_n}
\newcommand{\cutn}{\mathrm{cut}_{n}}
\newcommand{\Ca}{\mathcal{C}_a}
\newcommand{\cuta}{\mathrm{cut}_{a}}
\newcommand{\cutc}{\mathrm{cut}_c}
\newcommand{\cutcf}{\mathrm{cut}_{cf}}
\newcommand{\rk}{\mathrm{rk}}
\newcommand{\crit}{\mathrm{crit}}
\newcommand{\diam}{\mathrm{diam}}
\newcommand{\haus}{\mathcal{H}}
\newcommand{\po}{\mathrm{po}}
\newcommand{\gp}{\mathcal{G}_p}
\newcommand{\FF}{\mathcal{F}}
\newcommand{\He}{\mathbb{H}}
\newcommand{\vp}{\varphi}
\newcommand{\ds}{\displaystyle}
\newcommand{\cL}{\mathcal{L}}
\newcommand{\Sph}{\mathbb{S}}%
\newcommand{\supp}{\mathrm{supp}}%
\newcommand{\gru}{\nabla_0u}
\newcommand{\grho}{\nabla_0\zeta}
\newcommand{\grr}{\nabla_0r}

\newcommand{\essem}{\mathds{S}^m}
\newcommand{\erre}{\mathds{R}}
\newcommand{\errem}{\mathds{R}^m}
\newcommand{\enne}{\mathds{N}}
\newcommand{\acca}{\mathds{H}}
\newcommand{\cvett}{\Gamma(TM)}
\newcommand{\mob}{\mathrm{M\ddot{o}b}}
\newcommand{\mab}{\mathfrak{m\ddot{o}b}}
\newcommand{\TT}{\mathcal{T}}
\newcommand{\HHH}{\mathcal{H}}
\newcommand{\GG}{\mathbb{G}}
\def\beq{\begin{equation}}
\def\eeq{\end{equation}}

\title{\textbf{On the role of gradient terms in coercive \\ quasilinear differential inequalities \\ on Carnot groups}}
\author[1]{Guglielmo Albanese\thanks{guglielmo.albanese@unimi.it}}
\author[2]{Luciano Mari \thanks{mari@mat.ufc.br}}
\author[1,2]{Marco Rigoli\thanks{marco.rigoli@unimi.it}}

\affil[1]{\small Dipartimento di Matematica \emph{Federigo Enriques}, Universit\`{a} degli Studi di Milano, Via~Saldini~50, I-20133, Milano (Italy)}
\affil[2]{Departamento de Matem\'atica, Universidade Federal do Cear\'a, Av. Humberto Monte s/n, Bloco 914, 60455-760 Fortaleza (Brazil)}

\renewcommand\Authands{, and }

\maketitle

\normalsize

\begin{abstract}
In the sub-Riemannian setting of Carnot groups, this work investigates \emph{a priori} estimates and Liouville type theorems for solutions of coercive, quasilinear differential inequalities of the type
$$
\Delta_\GG^\varphi u \ge b(x) f(u) l(|\nabla u|).
$$
Prototype examples of $\Delta_\GG^\varphi$ are the (subelliptic) $p$-Laplacian and the mean curvature operator. The main novelty of the present paper is that we allow a dependence on the gradient $l(t)$ that can vanish both as $t \ra 0^+$ and as $t \ra +\infty$. Our results improve on the recent literature and, by means of suitable counterexamples, we show that the range of parameters in the main theorems are sharp. 
\end{abstract}

\section{Introduction}
The search for \emph{a-priori} estimates and Liouville type  properties for solutions of coercive quasilinear differential inequalities of the type 
\begin{equation}\label{equa_laplaphi}
\Delta^\varphi u \ge b(x)f(u)l(|\nabla u|)
\end{equation}
has captured the interest of researchers and stimulated a great amount of work in recent years, especially in the Euclidean space. In this respect, the purpose of the present paper is to investigate the role played by the gradient term $l(|\nabla u|)$ in the qualitative behaviour of solutions of \eqref{equa_laplaphi}. Our setting is that of Carnot groups, although the investigation could be carried over general Riemannian manifolds. 
To begin with, we need to recall some basic facts about Carnot groups, referring to \cite{BonfLancUguzz} for a thorough exposition. A Carnot group $\GG$ (of step $r$) is a  connected, simply-connected nilpotent Lie group with a graded Lie algebra $\mathcal{G} = \oplus_{i=1}^r V^{m_i}_i$, $m_i = \dim V_i$, $i=1,\dots,r$ generated by $V_1$, that is, $[V_1,V_{i}] = V_{i+1}$, $[V_1,V_{r}]=0$. Each Carnot group is isomorphic to a homogeneous Carnot group, that is, a triple $(\R^n, \circ, \{\delta_R\})$ where $\circ$ is a Lie group structure on $\R^n$, $n = \sum_1^r m_i$, and $\{\delta_R\}$ is a distinguished family of Lie group automorphisms (called dilations) acting as follows: if we write $x \in \R^n$ as $(x^{(1)}, \ldots, x^{(r)}) \in \R^{m_1} \times \ldots \times \R^{m_r}$, then
$$
\delta_R(x) \doteq (R x^{(1)}, R^2 x^{(2)}, \ldots, R^r x^{(r)}).
$$
In what follows, we will identify $\GG$ with the associated homogeneous Carnot group $(\R^n,\circ)$ via the procedure described in \cite{BonfLancUguzz} (Theorem 2.2.18). The collection
$$
\{x^\alpha\}_{\alpha=1}^n \doteq \{x^{(1)}_1, \ldots, x^{(1)}_{m_1}, x^{(2)}_1, \ldots, x^{(2)}_{m_2}, \ldots, x^{(r)}_1, \ldots, x^{(r)}_{m_r}\} 
$$
will then denote the induced coordinate system on $\GG$, and integrations will always be performed with respect to the standard Lebesgue measure $\di x$, which is left-invariant on $\GG$. The integer $Q \doteq \sum_{j=1}^r j m_j$ is called the homogeneous dimension of $\GG$. 
\begin{remark}
\emph{The easiest example of homogeneous Carnot group is the Euclidean space $\R^Q$, $Q \ge 1$, with the Lie group structure $+$. We remark that each Carnot group which is different from $(\R^Q,+)$ has homogeneous dimension $Q \ge 4$, being at least of step $2$. 
}
\end{remark}
For each $1 \le j \le m_1$, let $X_j$ be the left-invariant vector field generated by the coordinate vector field $\partial_j$ at the unit element (the origin of $\R^n$). We can endow the span of $\{X_j\}$, called the first layer, with a Riemannian metric $\metric$ given by declaring $\{X_j\}$ to be orthonormal. Consequently, one has a natural notion of horizontal gradient and divergence: for $u \in C^1(\GG)$ and $Y = y^j X_j$ horizontal (i.e. in the first layer), we set
$$
\nabla_0 u \doteq \sum_{j=1}^{m_1} X_j(u)X_j, \qquad \diver_0 Y = \sum_{j=1}^{m_1} X_j(y^j).
$$
Thus, we can define the canonical sub-Laplacian $\Delta_{\GG}$, that is, the hypoelliptic operator  
\begin{equation}\label{sublapla}
\Delta_{\GG}u = \sum_{j=1}^{m_1} X_jX_j(u). 
\end{equation}
A key fact is that $\Delta_\GG$ possesses a fundamental solution $\Gamma(x)$ with pole at the origin. Hence, if $Q \ge 3$ (that is, if $\GG \neq \R,\R^2$), we can consider
$$
r(x) \doteq \Gamma(x)^{\frac{1}{2-Q}},
$$
which is continuous on $\GG$, smooth out of the origin and gives rise to a symmetric, homogeneous norm, that is, $r(x)>0$ iff $x \neq 0$, $r(\delta_R(x))= R r(x)$ and $r(x^{-1})=r(x)$. Being $r$ homogeneous of degree $1$ with respect to dilations, $|\nabla_0 r|$ is homogeneous of degree $0$, whence in particular $|\nabla_0r|$ is bounded. It can be proved that each pair of symmetric homogeneous norms $r_1,r_2$ on $\GG$ are equivalent, that is, they satisfy $C^{-1}r_1 \le r_2 \le Cr_1$ on $\GG$ for some constant $C>0$; thus, for the purpose of our paper, it is irrelevant which norm we will use, and, for this reason, hereafter we will fix one such $r$ and rescale it in order to satisfy $|\nabla_0r| \le 1$. As for the cases $Q=1$ and $Q=2$, that is, when $\GG = \R$ or $\GG=\R^2$, in what follows we will consider the Euclidean homogeneous norm $r(x) = |x|$. Let $B_R \doteq \{ x : r(x) < R\}$ be a sub-level set for $r$, hereafter called a ball of radius $R$. The homogeneity of $r$ with respect to $\delta_R$ imply that the volume of $B_R$ grows polynomially in $R$: indeed, changing variables according to $x = \delta_R(y)$, we have $\det((\delta_R)_*) = R^Q$ and thus 
\begin{equation}\label{volgrowth}
\vol\big(B_R\big) = \int_{\{r(x) <R\}} \di x = R^Q \int_{\{r(y) <1\}} \di y = CR^Q.
\end{equation}

\begin{example1}
\emph{The simplest, non-trivial example of Carnot group is the Heisenberg group $\He^m$ of real dimension $2m+1$, that is, the manifold $\He^m=\C^m \times \R$ with group structure given by 
$$
(z,t) \circ (z',t')=(z+z',t+t'+2 \mathrm{Im} \langle z,z'\rangle), \qquad \forall \, (z,t),(z',t') \in \He^m,
$$
where $\metric$ denotes the usual hermitian product in $\C^m$. A basis for the left-invariant vector fields  of $\He^m$ is given by
\begin{equation}\label{base}
X_k=2 \mathrm{Re} \frac{\partial}{\partial z_k}+2 \mathrm{Im} z_k	\frac{\partial}{\partial t} \ , \quad 
Y_k=2 \mathrm{Im} \frac{\partial}{\partial z_k}-2 \mathrm{Re} z_k	\frac{\partial}{\partial t} \ , \quad \frac{\partial}{\partial t}
\end{equation}
for $k=1,\ldots, m$, and they satisfy Heisenberg's canonical commutation relations
\begin{equation}\label{comm}
\qquad [X_j,Y_k]=-4\delta_{jk}\frac{\partial}{\partial t},
\end{equation}
all the other commutators being zero. The fields $\{X_j, Y_j\}_{j=1}^m$ generate the first layer, and dilations are given by $\delta_R(z,t) = (R z, R^2 t)$. The standard homogeneous norm is 
\begin{equation}\label{dist}
r(x)=r(z,t)=\left( \left| z \right|^4 +t^2 \right)^{\frac{1}{4}},
\end{equation} 
where $|\cdot |$ is the norm in $\C^m$, and a simple computation shows that $|\nabla_0r|^2= |z|^2/r^2 \le 1$.
}
\end{example1}

Given a Carnot group $\GG$, the sub-Laplacian $\Delta_\GG$ can always be written in divergence form with respect to the underlying Euclidean structure: more precisely, If $X_j = c_{j\alpha} \partial_\alpha$ is the expression of $X_j$ in the canonical basis $\{\partial_\alpha\}_{\alpha=1}^n$, then 
\begin{equation}\label{subla_divergence}
\Delta_\GG u = c_{j\alpha}\partial_\alpha\big(c_{j\beta}\partial_\beta u\big) = \partial_\alpha \big(c_{j\alpha}c_{j\beta}\partial_\beta u\big), 
\end{equation}
where the last identity follows from an important property of Carnot groups, namely, the fact that the coefficient $c_{j\alpha}$ does not depend on the $\alpha$-th coordinate (\cite{BonfLancUguzz}, p. 64). Consequently, we can consider weak solutions of differential inequalities with $\Delta_\GG$ in regularity classes less demanding than $C^2(\GG)$. More generally, given  $\varphi$ satisfying
\begin{equation}\label{ipo_varphi}
\varphi \in C^0\big([0,+\infty)\big), \qquad \varphi \ge 0 \ \text{ on } \, [0,+\infty),
\end{equation}
one can define a $\varphi$-Laplacian by setting 
$$
\Delta_{\GG}^\varphi u = \diver_0\left( \frac{\varphi(|\nabla_0u|)}{|\nabla_0u|} \nabla_0u\right)
$$
in the weak sense. Typical examples are:
\begin{itemize}
\item[-] the $p$-Laplacian, where $\varphi(t) = t^{p-1}$ and $p \ge 1$;
\item[-] the (generalized) mean curvature operator, where $\varphi(t) = t^{k-1}(1+ t^k)^{-1/2}$ and $k \ge 1$. The usual mean curvature operator is recovered for $k=2$.
\end{itemize} 
In what follows, for convenience we will just consider $\lip_\loc$-solutions. We recall that a function $u : U \subset \GG \ra \R$ is said to be Lipschitz if there exists $C>0$ such that
\begin{equation}\label{lipsch}
u(x)-u(y) \le C r(x^{-1}y) \qquad \text{for each $x,y \in U$,}
\end{equation}
where $r$ is a fixed homogeneous norm. By \eqref{subla_divergence} and a standard argument (see for instance \cite{evansgariepy}, Thm.5 p. 137), it is not hard to show that \eqref{lipsch} implies the existence and the local boundedness of the weak derivatives $X_j(u)$. 
\begin{notation}
\emph{Hereafter, given a function $u$ on $\GG$, we denote with $u^* \doteq \sup_\GG u$, $u_* \doteq \inf_\GG u$. We set $\R^+_0\doteq [0,+\infty)$, $\R^+ \doteq (0,+\infty)$.
}
\end{notation}

\begin{definition}\label{def_weakform}
Let $f\in C^0(\R)$, $l \in C^0(\R_0^+)$ and $b(x) \in C^0(\GG)$. A function $u \in \lip_\loc(\GG)$ is said to be a weak solution of 
\begin{equation}\label{ineq_usiamo}
\Delta^\varphi_{\GG}u \ge b(x)f(u)l(|\nabla_0u|) \qquad \text{(resp. $\le$)}
\end{equation}
if, for each $0 \le \phi \in \lip_c(\GG)$, 
\begin{equation}\label{def_weakequa}
\int_{\GG} \frac{\varphi(|\nabla_0u|)}{|\nabla_0u|} \langle \nabla_0 u, \nabla_0 \phi \rangle \di x \le - \int_{\GG} b(x)f(u)l(|\nabla_0u|) \phi \di x \qquad \text{(resp. $\ge$).}
\end{equation}
\end{definition}
\vspace{0.1cm}
In the present paper, we investigate solutions $u \in \lip_\loc(\GG)$ of 
\begin{equation}\label{equa_padraoHen}
\Delta^\varphi_{\GG} u \ge b(x)f(u) l(|\nabla_0u|), 
\end{equation}
possibly changing sign, under the following basic assumptions:
\begin{equation}\label{assumptions}
\left\{\begin{array}{ll}
\text{(WpC)} & : \quad \left\{\begin{array}{ll} \varphi \in C^0(\R^+_0), \quad \varphi(0)=0, \quad \varphi(t) > 0 \ \text{ on } \, \R^+; \\[0.1cm]
\text{there exists $p \ge 1$, $C>0$ such that } \, \varphi(t) \le Ct^{p-1} \ \text{ on } \, \R^+;
\end{array}\right. \\[0.5cm]
 & \quad b \in C^0(\GG), \qquad b > 0 \ \text{ on } \, \GG; \\[0.2cm]
& \quad f \in C^0(\R), \qquad l \in C^0(\R_0^+), \qquad l(t)>0 \ \text{if } \, t>0.
\end{array}\right.
\end{equation}

Agreeing with the notation in \cite{DAmbrMit, dambrosiomitidieri_2}, the tag (WpC) above denotes the \emph{weak $p$-coercivity} of $\Delta_\GG^\varphi$. Note that the $p$-Laplacian is (WpC), and the generalized mean curvature operator is (WpC) for each $k/2\le p \le k$. \par

\begin{remark}
\emph{We have decided to consider locally Lipschitz functions for the sake of simplicity, but our theorems could be stated for Sobolev classes of solutions. This setting is more appropriate for problem \eqref{equa_padraoHen} since, as remarked in \cite{dambrosiomitidieri_2}, in the generality \eqref{assumptions} a weak Harnack inequality for solutions of \eqref{equa_padraoHen} seems still missing, hence $u$ is not even guaranteed to be locally bounded. However, investigating \eqref{equa_padraoHen} without the property $u \in L^\infty_\loc(\GG)$ requires various non-trivial adjustments (see \cite{dambrosiomitidieri_2} for details) which we prefer to avoid in order to help readability.  
}
\end{remark}

We are interested in the following two problems:
\begin{itemize}
\item[(P1)] under which conditions relating $\varphi,f,l,b$ we can obtain a maximum principle at infinity, stating that \emph{a-priori} slowly growing solutions of \eqref{equa_padraoHen} are bounded above and satisfy $f(u^*) \le 0$ (if they are not constant);
\item[(P2)] which is the optimal growth condition on $f$ (in the spirit of Keller and Osserman's  works \cite{keller,osserman}), in terms of $l,\varphi,b$, to guarantee the following \emph{a-priori} estimate: each solution of \eqref{equa_padraoHen} is bounded above.
\end{itemize}
From solving (P1) and (P2), one can readily obtain Liouville type  theorems for particular $f$ and $u$ solving \eqref{equa_padraoHen}. For instance, given $\varphi,b,f,l$, if both the properties listed in (P1),(P2) hold and $f>0$ on $\R^+$, then there are no non-constant, non-negative solutions of \eqref{equa_padraoHen}: this because (P1) and (P2) would imply that any non-constant solution $u$ of \eqref{equa_padraoHen} must be bounded and satisfy $f(u^*) \le 0$, hence $u^* \le 0$.\par

Our approach to (P1) has its roots in the works \cite{rigolisalvatorivignati, rigolisalvatorivignati_2, rigolisalvatorivignati_3, prs_gafa} by the third author and his collaborators, and in the subsequent improvements in \cite{PRS_MEM, mrs}. Interesting Liouville theorems for slowly growing solutions have also been shown in \cite{farinaserrin, pucciserrin_2, dambrosiomitidieri_2} for a broad class of differential inequalities including \eqref{equa_padraoHen}. However, as we shall see, the results in \cite{farinaserrin, pucciserrin_2, dambrosiomitidieri_2} are skew with our Theorem \ref{teo_main_2} below, that is, the range of parameters considered is quite different from our. Regarding problem (P2), it has recently generated a vast literature in the present setting of Carnot groups, see \cite{MMMR, brandomagliaro, bordofilipucci, DAmbrMit, dambrosiomitidieri_2}, and will be commented in awhile. However, it seems to us that the role of the term $l(|\nabla_0u|)$ when dealing with problems (P1) and (P2) still needs to be clarified, especially in the case when 
\begin{equation}\label{assumpt_l0infty}
\lim_{t \ra 0^+} l(t) = 0, \qquad \lim_{t \ra +\infty} l(t)=0.
\end{equation}
Evidently, assumption \eqref{assumpt_l0infty} is the gradient dependence that makes the validity of the properties in (P1) and (P2) more difficult to achieve. What are the optimal decay rates of $l$ at zero and infinity that guarantee the solvability of (P1) and (P2)? How is this decay related to the behaviour of $f,b$ and to that of $\varphi$?\par 
The outcome of the present paper are two sharp criteria, Theorems \ref{teo_main_2} and \ref{teo_main} below, to answer problems (P1) and (P2). We deal with a large family of $\varphi$-Laplace operators with special emphasis on mean curvature type ones, and we investigate gradient dependences $l(t)$ that may vanish both at zero and at infinity. \par 

Before stating our main theorems, we give a brief account on related works. The literature on \eqref{equa_padraoHen} is huge, and for this reason we decided to focus on those results specifically for problems (P1) and (P2) that allow a non-trivial gradient dependence $l(t)$. \par
To the best of our knowledge, Liouville type   theorems for global solutions of \eqref{equa_padraoHen} have mainly been investigated by means of two different approaches: the first rests on radialization techniques and refined comparison theorems (\cite{mrs, MMMR, brandomagliaro, bordofilipucci, pucciserrin_2}), while the second  is directly based on the weak formulation, via a careful choice of test functions, in the spirit of the work of Mitidieri-Pohozaev \cite{MitidPohoz} (\cite{DAmbrMit, farinaserrin, dambrosiomitidieri_2}). Radialization techniques exploit the properties of a homogeneous norm $r(x)$ in order to construct suitable radial supersolutions. In doing so, an integral condition relating $f,l, \varphi$ naturally appears, that is sufficient and in many cases necessary for the existence of uniform estimates from above for solutions of \eqref{equa_padraoHen}. We briefly recall how this condition, called the Keller-Osserman condition, is defined. Suppose that $f>0$ on $\R^+$, and set 
\begin{equation}\label{def_Fe}
F(t) \doteq \int_0^t f(s) \di s.
\end{equation}
Assuming that 
\begin{equation}\label{phiel}
\varphi \in C^1(\R^+), \qquad \varphi'>0 \ \text{ on } \, \R^+, \qquad \frac{s\varphi'(s)}{l(s)} \in L^1(0^+)\backslash L^1(+\infty), 
\end{equation}
the function 
\begin{equation}\label{def_K}
K(t) \doteq \int_0^t \frac{s\varphi'(s)}{l(s)}\di s
\end{equation}
realizes a homeomorphism of $\R^+_0$ onto itself, and thus it admits an inverse $K^{-1} : \R^+_0 \ra \R^+_0$. The Keller-Osserman condition for \eqref{equa_padraoHen} is the next integrability requirement:
\begin{equation}\label{KO}\tag{KO}
\frac{1}{K^{-1}\circ F} \in L^1(+\infty).
\end{equation}
When $\varphi(t) = t^{p-1}$, which is the case of the $p$-Laplacian, and for $l \equiv 1$, \eqref{KO} takes the well-known expression
$$
\frac{1}{F^{1/p}} \in L^1(+\infty).
$$
It is important to note that \eqref{KO} does \emph{not} depend on the underlying (Riemannian or sub-Riemannian) space. However, geometric data such as curvatures and volume growth of balls  appear as restrictions on $l,\varphi$ to ensure that \eqref{KO} implies uniform estimates from above (see \cite{mrs}). The origin of this restriction is deep and not yet clarified, and is the subject of a forthcoming paper \cite{BMPR}.\par
The Keller-Osserman condition \eqref{KO} originated from the papers \cite{keller, osserman} in the prototype case $\Delta u \ge f(u)$ on $\R^m$ and, as far as we know, first appeared for nontrivial $l(t)$ in a paper of R. Redheffer \cite{redheffer} (Corollary 1 therein) in the investigation of the inequality $\Delta u \ge f(u)l(|\nabla u|)$. Since then, it has been systematically studied by various authors, and for \eqref{equa_padraoHen} with nontrivial $l(t)$ we stress \cite{caristimiti} (for the 1-dimensional problem), \cite{bandlegrecoporru, greco, filippucci} (when $\Delta^\varphi_\GG$ is the mean curvature operator) and \cite{MartioPorru, filipupuccirigoli, filippucci} (when $\Delta^\varphi_\GG$ is the $p$-Laplacian).
\par

Regarding the sub-Riemannian setting, in Theorem 1.1 of \cite{MMMR} the authors showed that \eqref{equa_padraoHen} with $b(x) \equiv 1$ has no non-negative solutions on $\GG=\He^n$ if \eqref{KO} holds, provided that $\varphi, l$ satisfy some homogeneity conditions subsequently removed in \cite{brandomagliaro}. Companion existence results under the failure of \eqref{KO} have been given in \cite{MMMR} (for the $p$-Laplacian on $\He^n$), in \cite{brandomagliaro} (for the $\varphi$-Laplacian on a Carnot group), and in the very recent \cite{bordofilipucci} on $\He^n$. We underline that the extension from $\He^n$ to a general Carnot group is not merely ``cosmetic", since, in this more general setting, radialization techniques yield ordinary differential equations difficult to tackle because of the existence of an extra term, namely, $\langle \nabla_0|\nabla_0 r|^2, \nabla_0 r \rangle$, which is identically zero for the standard homogeneous norm in  $\He^n$. Note that the equality $\langle \nabla_0|\nabla_0 r|^2, \nabla_0 r \rangle=0$ means that the norm is $\infty$-harmonic.\par 
On the positive side, \eqref{KO} is sharp and mildly demanding on $f$ and $l$; in particular it does not require $f$ or $l$ to be of polynomial type. Furthermore, it can be considered for a wide class of quasilinear operators of geometric and analytic interest. However, in this setting there are two drawbacks: firstly, the results in \cite{MMMR, brandomagliaro, bordofilipucci} require a (relaxed) monotonicity of $l$, namely the first two need $f$ and $l$ to be $C$-increasing, that is, 
$$
\sup_{[0,t]} l \le Cl(t) \qquad \text{on $\R^+_0$,} \ \text{ for some constant } \, C \ge 1,
$$
while the third needs $l$ to be $C$-decreasing. In particular, none of these results admits the possibility that $l$ vanishes at the same time at zero and at infinity\footnote{In this respect, there is an inaccuracy in Theorem 1.1 of \cite{MMMR}: the assumptions should have included condition $l(0)>0$, which is necessary for the proof to work in $\He^n$; however, $l(0)>0$ can be removed with a careful measure-theoretic argument, as shown in \cite{brandomagliaro_2}.}. Secondly, their technique heavily uses the continuity of the (horizontal) gradient of the solutions, and it seems hard to adapt the argument to solutions with a regularity weaker than $C^1$. Another feature of the above method is that, when $l$ is bounded from below by a positive constant at infinity, the second condition in \eqref{phiel} is not taylored for the mean curvature operator. Operators of this kind, more precisely those with $\varphi$ satisfying $t \varphi'(t) \le C\varphi(t)$ on $\R^+$, have been considered in Section 4 of \cite{mrs} in a Riemannian setting (see also \cite{greco}). There, the authors propose to replace $s\varphi'(s)$ with $\varphi(s)$ in the definition of $K(t)$ in \eqref{def_K}, and in this way they still achieve the non-existence of non-constant, non-negative solutions of \eqref{equa_padraoHen} under the validity of \eqref{KO}. As we will show later, this replacement still gives a sharp result, see the discussion after Remark \ref{rem_belluz} below. It should be noted that Corollary A2 in \cite{mrs} seems to be the first Liouville type  theorem for inequalities of type \eqref{equa_laplaphi} on general Riemannian manifolds, in particular, not requiring a polynomial volume growth of geodesic balls. For the sake of comparison with our main result, we state Corollary A2 in \cite{mrs} in the Euclidean space\footnote{In Corollary A2 in \cite{mrs}, the Euclidean case is achieved by setting $\beta=-2$. In this respect, note that there is typo, a missing minus sign in the right-hand side of inequality (1.12) therein, which should be replaced by 
$$
\Ricc_{n,m}(L_D) \ge - H^2\big(1 +r^2\big)^{\frac{\beta}{2}}.
$$
} and for the mean curvature operator. 

\begin{theorem}[\cite{mrs}, Cor. A2]\label{teo_JFA}
Let $\mu, \chi  \in \R$ satisfy 
\begin{equation}\label{range_JFA}
0 \le \chi < 1, \qquad 0 \le \mu < 1-\chi, 
\end{equation}
and let $f \in C^0(\R)$, $f(0)=0$, $f>0$ on $\R^+$. Suppose that $f$ is $C$-increasing on $\R^+$ and, setting $F$ as in \eqref{def_Fe}, assume that 
\begin{equation}\label{grande_F}
F^{-\frac{1}{1-\chi}} \in L^1(+\infty).
\end{equation}
Then, each $C^1$-solution of 
\begin{equation}\label{eq_curvame}
\diver\left( \frac{\nabla u}{\sqrt{1+|\nabla u|^2}}\right) \ge \left( 1+ |x|\right)^{-\mu}f(u)|\nabla u|^\chi \qquad \text{on } \, \R^n
\end{equation}
is either non-positive or constant\footnote{Indeed, the statement of Corollary A2 in \cite{mrs} is that any non-negative solution of \eqref{eq_curvame} is constant. However, Corollary A2 is just a particular case of Theorem A therein, whose conclusion is the one reported in Theorem \ref{teo_JFA} above.}.
\end{theorem}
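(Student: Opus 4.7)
The plan is to follow the radialization strategy of \cite{mrs}, arguing by contradiction: assume $u$ is a non-constant $C^1$-solution of \eqref{eq_curvame} and that $u(x_0)>0$ at some point $x_0$. I would establish first a uniform upper bound $u^*<\infty$ via a blowing-up radial supersolution, and then upgrade it to $f(u^*)\le 0$ via a weak maximum principle at infinity, reaching a contradiction with $f>0$ on $\R^+$.

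\textbf{Step 1 (Finite blow-up of a radial supersolution).} For each $\alpha_0>0$ I want to build a $C^1$ radial function $w=w(r)$, defined on a bounded interval $[0,R)$ with $w(R^-)=+\infty$, satisfying the reverse inequality
\[
\diver\!\left(\frac{\nabla w}{\sqrt{1+|\nabla w|^2}}\right)\le (1+r)^{-\mu}f(w)\,(w')^{\chi} \qquad \text{on } (0,R),
\]
together with $w(0)=\alpha_0$, $w'(0)=0$, $w'>0$ on $(0,R)$. Integrating the associated ODE once gives
\[
r^{n-1}\,\frac{w'}{\sqrt{1+(w')^2}}\,=\,\int_0^r s^{n-1}(1+s)^{-\mu}f(w)\,(w')^{\chi}\di s,
\]
and multiplying by $w'$ and using the $C$-increasing character of $f$ one is led to an energy inequality of the form $\varphi(w')\ge c\,(1+r)^{-\mu}F(w)^{1-\chi}$, where $\varphi(t)=t/\sqrt{1+t^2}$. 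Solving this first-order inequality by separation of variables and inverting the transform built from $\varphi(s)/l(s)$ (the mean-curvature variant of \eqref{def_K} mentioned in the paragraph preceding Theorem~\ref{teo_JFA}), the range \eqref{range_JFA} together with the integrability \eqref{grande_F} forces $w$ to diverge at some \emph{finite} $R=R(\alpha_0)$.

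\textbf{Step 2 (Comparison and uniform bound).} Given $x_0$ with $u(x_0)>0$, pick $\alpha_0>u(x_0)$ and re-center the supersolution $w$ at $x_0$. A weak comparison principle for the mean-curvature operator with gradient-dependent right-hand side, applicable in the $\lip_\loc$ class because the right-hand side is nondecreasing in $u$ and monotone in $|\nabla u|$ on the relevant half-plane, yields $u\le w$ on $B_R(x_0)$, and in particular $u(x_0)\le \alpha_0$. Since the construction can be translated and $R$ depends only on structural data, this produces a uniform a-priori bound $u^*<\infty$.

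\textbf{Step 3 (Maximum principle at infinity).} Once $u^*<\infty$ is known, I would test \eqref{def_weakequa} with $\phi=\eta_R(|x|)\,(u-u^*+\eps)^+$, $\eta_R$ a standard radial cutoff supported in $B_{2R}$ equal to $1$ on $B_R$, and exploit the polynomial volume growth of Euclidean balls together with the boundedness of $\varphi(|\nabla u|)/|\nabla u|$ (a feature of the mean-curvature operator) to send $R\to+\infty$. The strict inequality $\mu<1-\chi$ makes the boundary contribution vanish in the limit, leaving $\int b\,f(u^*)\,l(|\nabla u|)\,\eta\le 0$ for every $\eta$, and hence $f(u^*)\le 0$; combined with $f>0$ on $\R^+$ this forces $u^*\le 0$, contradicting $u(x_0)>0$.

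\textbf{Main obstacle.} The technical heart of the argument is Step~1: the textbook Keller–Osserman machinery needs $s\varphi'(s)/l(s)\notin L^1(+\infty)$, which \emph{fails} for the mean-curvature choice since $s\varphi'(s)=s/(1+s^2)^{3/2}$ is integrable at infinity. One must therefore replace $s\varphi'$ with $\varphi$ in \eqref{def_K}, redo the energy estimate for the radial ODE, and verify that the sharp threshold becomes \eqref{grande_F} rather than the naive form of \eqref{KO}; it is precisely the interplay between the boundedness of $\varphi$, the decay $(1+r)^{-\mu}$ of the weight, and the parameter range \eqref{range_JFA} that ensures finite blow-up and gives sharpness of the admissible $(\chi,\mu)$.
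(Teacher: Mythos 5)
The result you are asked to prove is stated in the paper only as a citation of Corollary~A2 in \cite{mrs}; the paper itself supplies no proof, so there is nothing internal to compare against verbatim. What the paper does tell us about the method of \cite{mrs} is precisely what you identify in your ``main obstacle'': a radialization scheme in which the potential $K$ of \eqref{def_K} is built from $\varphi(s)/l(s)$ rather than $s\varphi'(s)/l(s)$, so that the Keller--Osserman threshold for the mean-curvature operator indeed becomes \eqref{grande_F}. Your Steps 1 and 2 (finite blow-up of a radial supersolution, then comparison) and the asymptotic bookkeeping $K(t)\sim t^{1-\chi}$, $K^{-1}\circ F\sim F^{1/(1-\chi)}$ are consistent with that strategy, and you are right that the monotonicity hypotheses ($C$-increasing $f$, $l(t)=t^\chi$ nondecreasing) are what make the comparison viable.

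There is, however, a genuine gap in Step~3. With the test function $\phi=\eta_R\,(u-u^*+\varepsilon)^+$, the boundary term one must control is
$$
\left|\int (u-u^*+\varepsilon)^+\,\frac{\varphi(|\nabla u|)}{|\nabla u|}\,\langle\nabla u,\nabla\eta_R\rangle\right|
\ \le\ \varepsilon\int_{B_{2R}\setminus B_R}|\nabla\eta_R|\ \lesssim\ \varepsilon\,R^{n-1},
$$
which does not tend to zero as $R\to+\infty$; neither the polynomial volume growth nor the boundedness of $\varphi(t)/t$ saves this, and the condition $\mu<1-\chi$ plays no visible role in your computation. A naive cutoff of this type simply cannot deliver the ``weak maximum principle at infinity'' for bounded solutions. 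Two rigorous routes exist. One is to upgrade Steps~1--2 so that the radial supersolution $w$ with $w(0)=\alpha_0$ blows up at a finite radius $R(\alpha_0)$ for \emph{every} $\alpha_0>0$; then comparison yields $u(x_0)\le\alpha_0$ for all $\alpha_0>0$ and hence $u\le 0$ directly, without a separate Step~3 (this is closer to the original Keller--Osserman mechanism, and uses the full force of $\frac{1}{K^{-1}\circ F}\in L^1(+\infty)$ near $t=0^+$ as well as at $+\infty$). The other is to invoke the refined test-function machinery used in Proposition~\ref{teo_maximum}: there the test function is $\zeta^p\lambda(u)F(v,r)$ with an exponential or power weight $F(v,r)$, and it is precisely the additional factor $F$ that makes the boundary contribution decay; a plain $\eta_R$ cannot be substituted for it. As written, your Step~3 would fail for all $n\ge 2$.
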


\begin{remark}\label{rem_strange!}
\emph{Since $f$ is $C$-increasing, $f(t) \ge c_1$ for some constant $c_1>0$ and $t$ large, and integrating we deduce $F(t) \ge c_2t$ for large $t$ and some $c_2>0$. Therefore, if $\chi>0$ assumption \eqref{grande_F} is \emph{automatically satisfied}. Thus, in the particular case $f(t) \ge Ct^\omega$ for $t$ large and $\omega\ge 0$, \eqref{grande_F} is always met if $\chi>0$, and if $\chi=0$ it is satisfied when $\omega>0$. In other words, \eqref{grande_F} is met if $\omega>-\chi$.  We will come back to this point later.
}
\end{remark}

\begin{remark}\label{rem_fili}
\emph{For $f(t) = t^\omega$ and in the range 
\begin{equation}\label{range_filippucci}
0 \le \chi < 1, \qquad \mu < 2-\chi, \qquad \omega > 1-\chi, 
\end{equation}
if $p \in (1,n)$ the above result has been obtained in \cite{filippucci} (see Corollary 1.4 therein; the bound $p \in (1,n)$ is assumed at p.2904).
}
\end{remark}
The technique that we present here is different from the one described above, and is closer, in spirit, to the one developed in \cite{MitidPohoz, DAmbrMit, farinaserrin, farinaserrin_I, dambrosiomitidieri_2}. There, the authors consider inequalities of the type
\begin{equation}\label{lageneral}
\diver \mathcal{A}(x,u,\nabla u) \ge \mathcal{B}(x,u,\nabla u)
\end{equation}
and their non-coercive counterparts, on spaces including Carnot groups, under weak ellipticity requirements on $\mathcal{A}$ that, when rephrased for the operator $\Delta_\GG^\varphi$, give (WpC) in \eqref{assumptions}\footnote{To be precise, rephrasing condition (WpC) in \cite{DAmbrMit, dambrosiomitidieri_2} to our setting would give $\varphi \ge 0$ on $\R^+$, instead of the stronger $\varphi>0$ on $\R^+$ which we assume throughout this paper.}. As far as we know, the only result that allows the gradient term to vanish both at zero and at infinity is the following, due to A. Farina and J. Serrin \cite{farinaserrin}. In order to compare with our main theorems, we rename their parameters to agree with our notation, and we state their theorem for
$$
\mathcal{A}(x,z,\rho) = \frac{\varphi(|\rho|)}{|\rho|}\rho, \qquad \mathcal{B}(x,z,\rho) = b(x)f(z)l(|\rho|).
$$


\begin{theorem}[\cite{farinaserrin}, Thm 6.]\label{teo_farinaserrin}
On the Euclidean space $\GG=\R^n$, consider $\varphi,b,f,l$ satisfying assumptions \eqref{assumptions} for some $p>1$.
Assume  that
\begin{equation}\label{ipo_bfarise}
b(x) \ge C_1\big( 1+ |x|\big)^{-\mu} \qquad \text{on } \, \R^n,
\end{equation}
for some $\mu \in \R$, $C_1 \in \R^+$, and that
\begin{equation}\label{condifl_farinaserrin}
\begin{array}{l}
\disp tf(t) \ge C_2 \cdot\left\{ \begin{array}{lll}
|t|^{\omega_1+1} & \quad \text{if } \, |t| \le 1, & \quad \text{for some } \, \omega_1 \ge 0\\[0.1cm]
|t|^{\omega_2+1} & \quad \text{if } \, |t| \ge 1, & \quad \text{for some } \, \omega_2 \in \R \\[0.1cm]
\end{array}\right. \\[0.7cm]
\disp l(t) \ge C_2 \cdot \left\{ \begin{array}{lll}
t^{\chi_1} & \quad \text{if } \, t \in [0,1], & \quad \text{for some } \, \chi_1 \ge 0 \\[0.1cm]
t^{\chi_2} & \quad \text{if } \, t > 1, & \quad \text{for some } \, \chi_2 \in \R, \\[0.1cm]
\end{array}\right.
\end{array}
\end{equation}
for some $C_2 \in \R^+$. If $l(0)>0$, suppose further that $\chi_1=0$. Set 
\begin{equation}\label{parafarina}
\underline{\omega} \doteq \min\{\omega_1,\omega_2\}, \quad  
\underline\chi \doteq \min\{\chi_1,\chi_2\}, \quad \bar \chi \doteq \max\{\chi_1,\chi_2\} \ge 0.
\end{equation}
If 
\begin{equation}\label{condifarina}
\bar\chi \le p-1, \qquad \mu < p- \bar\chi, \qquad \underline{\omega} > p-1-\underline{\chi}, 
\end{equation}
then any $C^1$-solution of 
$$
\diver\left( \frac{\varphi(|\nabla u|)}{|\nabla u|}\nabla u \right) = b(x) f(u)l(|\nabla u|) \qquad \text{on } \, \R^n
$$
must be constant.
\end{theorem}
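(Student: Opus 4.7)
The plan is to use the test-function (capacity) method of Mitidieri--Pohozaev, whose implementation here requires care because of the piecewise structure of the hypotheses on $f$ and $l$. Fix a standard radial cutoff $\eta_R\in\lip_c(\R^n)$ with $0\le\eta_R\le 1$, $\eta_R\equiv 1$ on $B_R$, $\supp\eta_R\subset B_{2R}$ and $|\nabla\eta_R|\le c/R$. Since $tf(t)\ge 0$, choose a Lipschitz, odd, non-decreasing $\Psi:\R\to\R$ with $f(u)\Psi(u)\ge 0$ on $\R$, and substitute $\phi=\eta_R^{q}\Psi(u)$ into the weak form of the equation (with $q\ge 1$ to be tuned later). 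Using $\Psi'(u)\ge 0$ and $f(u)\Psi(u)\ge 0$, a direct rearrangement yields the fundamental estimate
$$
\int b(x)f(u)\Psi(u)\,l(|\nabla u|)\eta_R^{q}\,dx\;\le\; q\int\eta_R^{q-1}\varphi(|\nabla u|)\,|\Psi(u)|\,|\nabla\eta_R|\,dx,
$$
after discarding the non-negative ``$\Psi'$-term''.

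The next step is to bound $\varphi(t)\le Ct^{p-1}$ and split the right-hand side according to whether $|\nabla u|\le 1$ or $|\nabla u|>1$. In each regime I apply H\"older--Young to absorb an $\varepsilon$-fraction of the left-hand side back into the right, with Young exponents tuned to the corresponding $\chi_1,\chi_2$; the admissibility of such a splitting is precisely what the bound $\bar\chi\le p-1$ encodes. A parallel splitting is performed according to $|u|\le 1$ vs.\ $|u|>1$ in order to invoke the lower bounds on $tf(t)$ in terms of $\omega_1,\omega_2$. What remains after absorption is a cutoff integral of the form
$$
C_\varepsilon\int_{B_{2R}\setminus B_R}(1+|x|)^{\mu\theta}|\Psi(u)|^{\rho}|\nabla\eta_R|^{\sigma}\,dx,
$$
with explicit $\theta,\rho,\sigma>0$ determined by the Young balance and the exponents of $l$.

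Combining $|\nabla\eta_R|\le c/R$ with the Euclidean volume bound $|B_{2R}|\le CR^n$, the remainder is dominated by $CR^{\,n+\mu\theta-\sigma}\|\Psi(u)\|_{\infty}^{\rho}$. A direct arithmetic check shows that the condition $\mu<p-\bar\chi$ is exactly what forces $n+\mu\theta-\sigma<0$, so letting $R\to+\infty$ leaves $\int_{\R^n}b(x)f(u)\Psi(u)l(|\nabla u|)\,dx=0$; non-negativity of the integrand then gives that it vanishes almost everywhere, and choosing $\Psi$ to vanish only at $0$ forces $\nabla u\equiv 0$ wherever $u\ne 0$, so $u$ is constant. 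The third hypothesis $\underline\omega>p-1-\underline\chi$ is what allows $\Psi$ to be taken in a sufficiently wide (possibly unbounded, when $\omega_2<0$) family so that the discarded $\Psi'$-term is genuinely harmless in the limit and the power of $\Psi(u)$ appearing in the remainder can indeed be absorbed.

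\emph{Expected main obstacle.} The heart of the proof is the combinatorial balancing of Young exponents across the four regions $\{|u|\le 1,\,|u|>1\}\times\{|\nabla u|\le 1,\,|\nabla u|>1\}$: the power $q$ in the cutoff, the shape of $\Psi$, and the two Young exponents (one per gradient regime) must be selected so that (i) absorption is consistent simultaneously in all four regions, (ii) the remainder decays in $R$, and (iii) the $\Psi'$-term dropped at the outset is genuinely harmless in the limit. Matching these three requirements is what pins down exactly the window $\bar\chi\le p-1$, $\mu<p-\bar\chi$, $\underline\omega>p-1-\underline\chi$ of \eqref{condifarina}.
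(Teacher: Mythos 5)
A preliminary remark: the paper does not prove Theorem~\ref{teo_farinaserrin}. It is stated in the introduction as a quoted result of Farina and Serrin, with a reference to their paper, to set the scene for the authors' own Theorems~\ref{teo_main} and~\ref{teo_main_2}. The closest argument actually carried out in the paper is the proof of Proposition~\ref{lem_importante}, which implements the same Mitidieri--Pohozaev capacity method in the cleaner power-law setting $f(t)\sim t^\omega$, $l(t)\sim t^\chi$, and which is explicitly modelled on Lemma~2.2 of Farina--Serrin. So the fair comparison is with that proposition's proof.

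Your overall scheme (weak formulation, cutoff $\eta_R$, a function $\Psi(u)$ in the test function, Young inequality, volume count, $R\to\infty$) is the right one. But there is a genuine gap: you discard the ``$\Psi'$-term'' $\int \eta_R^{q}\Psi'(u)\varphi(|\nabla u|)|\nabla u|$ at the very first step, and that term is not optional --- it is one of the three legs of the Young balance. Without it, you are left with a two-factor Young split, and matching the powers of $|\nabla u|$ on the two sides forces an inadmissible exponent. Concretely: the cutoff integrand carries $|\nabla u|^{p-1}$ (from $\varphi(t)\le Ct^{p-1}$), while the retained left-hand side only carries $|\nabla u|^{\chi}$ (from $l$). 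If the pure-cutoff remainder $\mathcal{J}_3$ is to have no $|\nabla u|$ at all, the Young exponent for $\mathcal{J}_1$ must satisfy $\chi/z_1=p-1$, i.e.\ $z_1=\chi/(p-1)\le 1$ under your constraint $\bar\chi\le p-1$, which is not a Young exponent. If instead you let $\mathcal{J}_3$ carry the surplus, then a positive power of $|\nabla u|$ (or, after rearranging, of $u$) survives in the annulus integral and is not a priori bounded. This is visible in your own outline, where the remainder still carries $|\Psi(u)|^{\rho}$; when $\Psi$ must be unbounded --- and, as you note, the third hypothesis $\underline\omega>p-1-\underline\chi$ exists precisely to permit this --- the factor $\|\Psi(u)\|_\infty$ is not finite, so the $R\to\infty$ step is inconclusive.

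The fix, which is exactly what Proposition~\ref{lem_importante} does, is to keep the gradient-energy term on the good side and run a \emph{triple} Young inequality: write $\eta_R^{q-1}|\Psi(u)|\varphi(|\nabla u|)|\nabla\eta_R| = \mathcal{J}_1^{1/z_1}\mathcal{J}_2^{1/z_2}\mathcal{J}_3^{1/z_3}$ with $\mathcal{J}_1$ proportional to the $bf\Psi l$ left-hand side, $\mathcal{J}_2$ proportional to $\eta_R^{q}\Psi'(u)\varphi(|\nabla u|)|\nabla u|$, and $\mathcal{J}_3$ a pure cutoff piece with no $u$- or $|\nabla u|$-dependence. Equating the exponents of $u$, $|\nabla u|$, $(1+r)$ and $\eta_R$ gives a linear system whose solution has $z_1,z_2,z_3>1$ precisely when $\omega>p-1-\chi$, and the $R$-power $n+\mu z_3/z_1-z_3$ of the remainder is negative precisely when $\mu<p-\chi$; the piecewise structure of $f,l$ then forces the use of the worst exponents $\underline\omega,\underline\chi,\bar\chi$, giving \eqref{condifarina}. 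Your outline correctly names the role of each constraint, but the balance cannot close after the $\Psi'$-term has been thrown away.
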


\begin{remark}
\emph{The first condition in \eqref{condifl_farinaserrin} implies that $t=0$ is the unique zero of $f$\footnote{For the validity of Theorem 6 in \cite{farinaserrin}, the authors assume that $\mathcal{B}(x,z,0)=0$ for each $(x,z) \in \R^n\times \R$ (condition (3) therein), which in our setting is granted when $l(0)=0$. However, if $l(0)>0$, the condition turns out to imply $f \equiv 0$, a highly demanding assumption. However, when $l(0)>0$ and $\chi_1=0$, according to the remark at the end of p.4410 in \cite{farinaserrin} it is sufficient that $\mathcal{B}(x,0,0)=0$, which is automatic since $f(0)=0$. This is the reason why we require $\chi_1=0$ when $l(0)>0$.}. 
}
\end{remark}

As it is apparent in \eqref{condifarina}, a fast decay of $l$ at infinity (that is, a highly negative $\underline{\chi}$) forces $\underline{\omega}$ (hence $\omega_1$ and $\omega_2$) to be very large in order to ensure the Liouville property. If we agree to consider a reasonable Keller-Osserman condition to be \eqref{KO} with the modified choice
\begin{equation}\label{modi_KO}
K(t) = \int_0^t \frac{\varphi(s)}{l(s)} \di s,
\end{equation}
and we suppose that $\varphi(t) \approx t^{p-1}$ at infinity, then we see that the third in \eqref{condifarina} is a sufficient condition for \eqref{KO} to hold.\\

We are ready to state our main results. The first guarantees \emph{a-priori} estimates for solutions $u$ of \eqref{equa_padraoHen} when $f$ enjoys a Keller-Osserman type condition.

\begin{theorem}\label{teo_main}
Let $\GG$ be a Carnot group with homogeneous norm $r$, and consider $\varphi, b,f,l$ meeting the assumptions in  \eqref{assumptions} for some $p>1$. Assume that, for some $\mu,\chi, \omega \in \R$ satisfying  
\begin{equation}\label{pararange}
0 \le \chi \le p-1, \qquad \mu< p-\chi, \qquad \omega > p-1 - \chi,
\end{equation}
the following inequalities hold:
\begin{equation}\label{assu_main}
\begin{array}{ll}
b \ge C(1+r)^{-\mu} & \quad \text{on } \, \GG, \\[0.2cm]
f(t) \ge C t^\omega & \quad \text{for } \, t \, \text{ large enough,} \\[0.2cm]
\disp l(t) \ge C\frac{\varphi(t)}{t^{p-1-\chi}} & \quad \text{on } \, \R^+,
\end{array}
\end{equation}
for some constant $C>0$. Then, for each solution $u \in \lip_\loc(\GG)$ of 
\begin{equation}\label{main_case}
\Delta_\GG^\varphi u \ge b(x)f(u)l(|\nabla_0u|) \qquad \text{on } \, \GG,
\end{equation}
it holds $u^* < +\infty$ and 
\begin{itemize}
\item[-] if $l(0)>0$, then $f(u^*) \le 0$;
\item[-] if $l(0)=0$, then $f(u^*) \le 0$ unless $u$ is constant.
\end{itemize}
Moreover, if the second in \eqref{assu_main} is replaced by 
\begin{equation}\label{equazione_fmain_simme}
tf(t) \ge C|t|^{\omega+1} \qquad \text{for } \, \ |t| \, \text{ large enough}, 
\end{equation}
then each solution $u$ of \eqref{main_case} with the equality sign satisfies $u \in L^\infty(\GG)$ and,  
\begin{itemize}
\item[-] if $l(0)>0$, then $f(u^*) \le 0 \le f(u_*)$;
\item[-] if $l(0)=0$, then $f(u^*) \le 0 \le f(u_*)$ unless $u$ is constant.
\end{itemize}
\end{theorem}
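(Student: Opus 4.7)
My plan is to argue by contradiction in the test-function style of \cite{MitidPohoz, DAmbrMit, farinaserrin}. Suppose the conclusion fails: there exists $u \in \lip_\loc(\GG)$ solving \eqref{main_case} with $f(u^*)>0$ (non-constant if $l(0)=0$). By the second inequality in \eqref{assu_main} I would choose $a\in(0,u^*)$ so that $f(t)\ge C_0 t^\omega$ for $t\ge a$ and $\{u>a\}$ has positive measure, and set $v := (u-a)_+$. For $R\gg 1$ take $\eta_R \in \lip_c(\GG)$ with $\eta_R\equiv 1$ on $B_R$, $\supp\eta_R \subset B_{2R}$, $|\nabla_0 \eta_R|\le C/R$, and plug $\phi = v^s\eta_R^q$ (parameters $s,q>0$ to be fixed) into Definition~\ref{def_weakform}. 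Expanding $\nabla_0 \phi$, applying Cauchy--Schwarz and \eqref{assu_main}, and (after absorbing the region $\{0<v<1\}$ by a preliminary shift in $a$) setting $\sigma:=p-1-\chi$, one obtains
\begin{equation*}
s\int v^{s-1}\eta_R^q\varphi(|\nabla_0 u|)|\nabla_0 u|\,\di x + C\!\!\int_{\{v\ge 1\}}\!\! (1+r)^{-\mu} v^{\omega+s}\frac{\varphi(|\nabla_0 u|)}{|\nabla_0 u|^\sigma}\eta_R^q\,\di x \le q\int v^s \eta_R^{q-1}\varphi(|\nabla_0 u|)|\nabla_0 \eta_R|\,\di x.
\end{equation*}

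The heart of the argument is a Young's inequality on the right-hand side with conjugate exponents $P:=p-\chi$ and $P':=(p-\chi)/(p-1-\chi)$. Using (WpC) in the form $\varphi(t)\le Ct^{p-1}$, the Young split produces, up to a small multiplier absorbable into the second LHS integral, a remainder integrand
\begin{equation*}
v^{s-\omega/\sigma}\eta_R^{q-P'}\varphi(|\nabla_0 u|)|\nabla_0 u|(1+r)^{\mu/\sigma}|\nabla_0 \eta_R|^{P'};
\end{equation*}
a second Young's inequality against the first LHS integral (coefficient $s$), or equivalently a direct factorisation of the above remainder as the $D$-integrand $v^{s-1}\eta_R^q\varphi(|\nabla_0 u|)|\nabla_0 u|$ times a cutoff-only weight, removes the residual $\varphi(|\nabla_0 u|)|\nabla_0 u|$ factor. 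The three conditions in \eqref{pararange} enter precisely at this juncture: $\chi\le p-1$ secures $P\ge 1$ and $P'<\infty$ (the endpoint $\chi=p-1$ being treated by a separate limiting argument); $\omega > p-1-\chi$ forces the $v$-exponent $s-\omega/\sigma$ to be non-positive for the canonical choice $s=\omega/\sigma$, so that $v^{s-\omega/\sigma}\le 1$ on $\{v\ge 1\}$; and $\mu<p-\chi$ balances the weight $(1+r)^{\mu/\sigma}\le CR^{\mu/\sigma}$ against $|\nabla_0 \eta_R|^{P'}\le CR^{-P'}$ in the final scaling.

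To conclude, I would invoke the polynomial volume growth \eqref{volgrowth} to dominate the cutoff remainder by a power of $R$ (the naive exponent being $Q-(p-\chi-\mu)/\sigma$); when this is already negative the remainder tends to $0$ as $R\to\infty$ directly, and otherwise a bootstrap/iteration argument (using a Caccioppoli-type bound for $\int_{B_R}|\nabla_0 u|^p$ combined with the test-function inequality iterated with shifted thresholds $a$) improves it to a negative exponent under the full range \eqref{pararange}. Letting $R\to\infty$ then forces the LHS coercive integral to vanish, contradicting the positivity of the measure of $\{v\ge 1\}$, and yielding $u^* < +\infty$ with $f(u^*)\le 0$. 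For the equality version with the two-sided growth \eqref{equazione_fmain_simme}, the same reasoning applied to $w:=-u$ (which, by oddness of $\Delta_\GG^\varphi$, solves an analogous inequality with the transformed nonlinearity $\tilde f(t):=-f(-t)$, still satisfying the required lower bound) gives $u_* > -\infty$ and $f(u_*)\ge 0$. The hardest step will be the precise Young's bookkeeping together with the scaling: one must orchestrate the two Young inequalities so that \emph{every} occurrence of $\varphi(|\nabla_0 u|)$ and $|\nabla_0 u|$ in the remainder is absorbed into the LHS, and then verify that the resulting $R$-exponent is controllable under \eqref{pararange}, which is the point at which the joint role of the three parameter conditions becomes indispensable.
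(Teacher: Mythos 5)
Your proposal shares the paper's overall architecture (test-function method of Mitidieri--Pohozaev / Farina--Serrin type, restriction to a high superlevel set, Young-type inequalities to absorb the gradient terms, polynomial volume growth to kill the remainder), but there is a genuine gap at the scaling step, and it is exactly where the strength of the method lies.

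In the paper, the test function is $\phi = \psi^\eta\lambda(u)u^\alpha$ and a \emph{triple} Young inequality is used with exponents $z_1,z_2,z_3$ depending on $\alpha$; after absorbing all gradient terms, the remainder is of order $R^{\mu z_3/z_1 - z_3 + Q}$, where
$$
\mu\frac{z_3}{z_1}-z_3+Q \;=\; \frac{\mu(\alpha+p-1)-p(\alpha+\omega)+\chi(\alpha-1)+Q(\omega-p+\chi+1)}{\omega-(p-\chi-1)}.
$$
The crucial observation is that the coefficient of $\alpha$ in the numerator is $\mu-p+\chi<0$, so \emph{by taking the free parameter $\alpha$ large the exponent of $R$ is driven to $-\infty$}, regardless of $Q$. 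This large-$\alpha$ mechanism is what makes the result hold without any restriction on the homogeneous dimension.

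Your proposal instead fixes $s=\omega/\sigma$ (``the canonical choice''), which throws away precisely this degree of freedom. With $s$ fixed, the naive $R$-exponent is $Q-(p-\chi-\mu)/(p-1-\chi)$, and you correctly note this need not be negative: in fact for $\mu$ close to $p-\chi$ (or for $Q$ large) it is positive. You then appeal to an unspecified ``bootstrap/iteration argument (using a Caccioppoli-type bound for $\int_{B_R}|\nabla_0 u|^p$ combined with the test-function inequality iterated with shifted thresholds $a$)'' to improve the exponent, but no such argument is given, and it is not clear why shifting the threshold $a$ would change the polynomial scaling in $R$ at all. The correct fix is the one the paper takes: keep the power $s$ (resp.\ $\alpha$) of $u$ in the test function as a free large parameter, balance all three Young exponents accordingly, and let $s\to\infty$. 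A secondary point: with your conjugate pair $P=p-\chi$, $P'=(p-\chi)/(p-1-\chi)$ you have $P'=\infty$ at the endpoint $\chi=p-1$, which forces the ``separate limiting argument'' you mention; the paper's $z_1,z_2,z_3$ remain finite and positive at $\chi=p-1$ and the endpoint is absorbed uniformly. The remaining steps in your sketch (working on the superlevel set, deducing $u^*<+\infty$ and $f(u^*)\le 0$, and passing to $-u$ with $\bar f(t)=-f(-t)$ under \eqref{equazione_fmain_simme}) all match the paper's deduction of Theorem \ref{teo_main} from Proposition \ref{lem_importante}.
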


\begin{remark}\label{rem_constants}
\emph{When $l(0)=0$, note that each constant function satisfies \eqref{main_case} with the equality sign. On the other hand, if $l(0)>0$, a constant $u=c$ solves \eqref{main_case} (respectively, with the equality sign) if and only if $f(c) \le 0$ (resp. $f(c)=0$). 
}
\end{remark}

\begin{remark}
\emph{In the particular case $l(t)\equiv 1$ and $\chi=0$, Theorem \ref{teo_main} has been proved in \cite{dambrosiomitidieri_2} (Corollary 8.6 therein) and \cite{farinaserrin_I} (Theorems 1 and 2 therein). We observe that Theorems 1 and 2 in \cite{farinaserrin_I} also consider the case $\mu > p$.  
}
\end{remark}


Our second main theorem deals with slowly growing solutions. To the best of our knowledge, this is the first result in the literature for the range of parameters \eqref{pararange_2} below, see also the next Remarks \ref{rem_liou} and \ref{rem_farinaserrinliou}. In view of Remark \ref{rem_constants}, we just concentrate on non-constant solutions. We underline that no restriction on the homogeneous dimension of $\GG$ is needed.

\begin{theorem}\label{teo_main_2}
Let $\GG$ be a Carnot group with homogeneous norm $r$, and consider $\varphi, b,f,l$ meeting the assumptions in  \eqref{assumptions} for some $p>1$. Assume that, for some $\mu,\chi \in \R$ satisfying  
\begin{equation}\label{pararange_2}
0 \le \chi < p-1, \qquad \mu < p-\chi,
\end{equation}
the following inequalities hold:
\begin{equation}\label{assum_main_2}
\begin{array}{ll}
b \ge C(1+r)^{-\mu} & \quad \text{on } \, \GG, \\[0.2cm]
f(t) \ge C & \quad \text{for } \, t \, \text{ large enough,} \\[0.2cm]
\disp l(t) \ge C\frac{\varphi(t)}{t^{p-1-\chi}} & \quad \text{on } \, \R^+,
\end{array}
\end{equation}
for some constant $C>0$. Let $u \in \lip_\loc(\GG)$ be a non-constant, weak solution of 
\begin{equation}\label{main_case_2}
\Delta_\GG^\varphi u \ge b(x)f(u)l(|\nabla_0u|) \qquad \text{on } \, \GG
\end{equation}
such that
\begin{equation}\label{opequeno}
u_+(x) = o\left(r(x)^{\frac{p-\chi-\mu}{p-\chi-1}}\right) \qquad \text{as } \, r(x) \ra +\infty.
\end{equation}
Then, $u$ is bounded above and $f(u^*) \le 0$.\\
If $u$ solves \eqref{main_case_2} with the equality sign, 
\begin{equation}\label{assu_barf}
tf(t) \ge C|t| \qquad \text{for } \, \ |t| \, \text{ large enough,} 
\end{equation}
and \eqref{opequeno} is valid with $u$ instead of $u_+$, then under the above assumptions $u \in L^ \infty(\GG)$ and $f(u^*) \le 0 \le f(u_*)$.
\end{theorem}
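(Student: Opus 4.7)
\medskip

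\noindent\textbf{Proof proposal for Theorem \ref{teo_main_2}.} The plan is to argue by contradiction via a weak-formulation/test-function argument in the spirit of Mitidieri--Pohozaev and D'Ambrosio--Mitidieri, suitably refined to handle the double degeneracy of $l$. Assume by way of contradiction that either $u^*=+\infty$ or $f(u^*)>0$. By continuity of $f$ and the lower bound $f(t)\ge C$ for $t$ large, there is $a\in \R$ such that the open set $\{u>a\}$ is nonempty and $f(u)\ge c_0>0$ on $\{u\ge a\}$. Set $v\doteq (u-a)_+\in \lip_\loc(\GG)$; since $u$ is non-constant and $u=a$ on $\partial\{u>a\}$, $v$ is not identically zero and $|\nabla_0 u|$ is not a.e.\ zero on $\{v>0\}$.

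For $R>1$ fix a Lipschitz cutoff $\eta=\eta_R(r)$ with $\eta\equiv 1$ on $B_R$, $\supp \eta\subset B_{2R}$, $0\le \eta\le 1$ and $|\nabla_0 \eta|\le C/R$ (using $|\nabla_0 r|\le 1$). Plug $\phi=\eta^q v^s\in \lip_c(\GG)$ into \eqref{def_weakequa}, with $q,s>1$ to be chosen. Since $\nabla_0 v=\nabla_0 u\,\mathbf{1}_{\{u>a\}}$ a.e.\ and $f(u)\ge c_0$ on $\supp\phi$, the weak formulation reduces to
\begin{equation}\label{plankey}
s\!\int \varphi(|\nabla_0 u|)|\nabla_0 u|\,\eta^q v^{s-1}\di x + c_0\!\int b\, l(|\nabla_0 u|)\,\eta^q v^s\di x \le q\!\int \varphi(|\nabla_0 u|)|\nabla_0 \eta|\,\eta^{q-1} v^s\di x.
\end{equation}

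The crux is to bound the right-hand side by the left. I would apply a three-factor Young's inequality, splitting the integrand as
$$
\varphi|\nabla_0\eta|\eta^{q-1}v^s = \bigl[\varphi|\nabla_0 u|\eta^q v^{s-1}\bigr]^{1/p_1}\bigl[b\,l\,\eta^q v^s\bigr]^{1/p_2}\,Z,
$$
with $1/p_1+1/p_2+1/p_3=1$. Using the upper bound $\varphi(t)\le Ct^{p-1}$ from (WpC) together with $l(t)\ge C\varphi(t)/t^{p-1-\chi}$ to eliminate the $|\nabla_0 u|$ from $Z^{p_3}$ leads to the balance condition $p/p_1+\chi/p_2=p-1$, under which
$$
Z^{p_3}\le C\,b^{-p_3/p_2}|\nabla_0\eta|^{p_3}\eta^{q-p_3}v^{s+p_3/p_1}.
$$
Choosing parameters so that the coefficients in front of the first two factors on the right can be made arbitrarily small, and absorbing them into the corresponding terms in \eqref{plankey}, gives
\begin{equation}\label{planabsorb}
c_0\!\int b\,l\,\eta^q v^s\di x \le C\!\int_{B_{2R}\setminus B_R} b^{-p_3/p_2}|\nabla_0\eta|^{p_3}\eta^{q-p_3}v^{s+p_3/p_1}\di x.
\end{equation}

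On the support of $\nabla_0\eta$ I use $|\nabla_0\eta|\le C/R$, the lower bound $b\ge C(1+r)^{-\mu}$ (so $b^{-p_3/p_2}\le C R^{\mu p_3/p_2}$), the polynomial volume growth $\vol(B_{2R})\le CR^Q$ from \eqref{volgrowth}, and the decisive slow-growth assumption $v\le u_+=o\bigl(r^{(p-\chi-\mu)/(p-1-\chi)}\bigr)$, writing $\omega_R\doteq\sup_{B_{2R}\setminus B_R} v/r^\gamma \to 0$ with $\gamma=(p-\chi-\mu)/(p-1-\chi)$. Substituting, the RHS of \eqref{planabsorb} is majorized by
$$
C\,\omega_R^{\,s+p_3/p_1}\,R^{\,\gamma\,s + p_3\,(\gamma/p_1+\mu/p_2-1)+Q}.
$$
The key algebraic observation, which matches the scaling threshold $\gamma$ in the statement, is that along the constraint curve $p/p_1+\chi/p_2=p-1$ the quantity $\gamma/p_1+\mu/p_2-1$ vanishes exactly at the boundary $1/p_1+1/p_2=1$ (i.e.\ $p_3=\infty$); selecting $(p_1,p_2)$ close enough to this critical pair ensures that the $R$-exponent is controlled, while $p_3$ grows without the bound blowing up. Combined with $\omega_R\to 0$, the right-hand side of \eqref{planabsorb} tends to $0$ as $R\to\infty$.

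The contradiction is then reached because the left-hand side of \eqref{planabsorb} (or, if $l(0)=0$ and the $bl$ term happens to vanish, the analogous bound involving the non-absorbed $\varphi|\nabla_0 u|\eta^q v^{s-1}$ term kept on the left) is bounded from below by a strictly positive constant independent of $R$, being controlled by the nontriviality of $v$ on any fixed ball. This gives $u^*<+\infty$ and $f(u^*)\le 0$. The statement on $u\in L^\infty(\GG)$ and $f(u_*)\ge 0$ when $u$ solves \eqref{main_case_2} with equality and $tf(t)\ge C|t|$ for $|t|$ large follows by applying the same argument to $-u$, since the equation is preserved by changing sign of $u$ up to replacing $f(t)$ with $-f(-t)$, which still satisfies the required lower bound at infinity.

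The main obstacle will be Step 3--4: choosing the three Young exponents jointly with $q$ and $s$ in such a way that the constraint $p/p_1+\chi/p_2=p-1$, the absorption conditions, and the sharp vanishing of $\gamma/p_1+\mu/p_2-1$ at the boundary of the feasible region are all met simultaneously across the full parameter range $0\le\chi<p-1$, $\mu<p-\chi$; the limiting case $\chi=(p-1)\mu$ will in particular require a careful L'H\^opital-type analysis of the ratio $p_3(\gamma/p_1+\mu/p_2-1)$ as one approaches $1/p_1+1/p_2=1$.
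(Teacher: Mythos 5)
The paper proves Theorem \ref{teo_main_2} by first establishing a quantitative maximum principle (Proposition \ref{teo_maximum}) whose test function is $\zeta^p\lambda(u)\,F(v,r)$ with the \emph{barrier}
$$
v(x)=\alpha(1+r(x))^{\sigma}-u(x),\qquad F(v,r)=v^{-\tau}\ \ (\text{or } e^{-\tau v(1+r)^{-\eta}}),
$$
which is well-defined and positive precisely because of the slow-growth hypothesis \eqref{opequeno}. The theorem then follows by taking $\sigma=\sigma^*$ and $\hat u=0$. Your proposal instead plugs in $\phi=\eta^q v^s$ with $v=(u-a)_+$ and runs a triple Young argument; this is essentially the mechanism the paper uses for \emph{Theorem \ref{teo_main}} (Proposition \ref{lem_importante}), not for Theorem \ref{teo_main_2}, and there it works only because the Keller--Osserman condition $\omega>p-1-\chi$ makes the third Young exponent $z_3$ positive. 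In the setting of Theorem \ref{teo_main_2} one only has $\omega=0<p-1-\chi$, so that mechanism is unavailable, and your replacement --- sending $p_3\to\infty$ while using $\omega_R\to 0$ --- does not close.

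Concretely, write $a_i=1/p_i$. On the constraint $pa_1+\chi a_2=p-1$ the feasible region for the triple Young is $a_1>a_1^*:=\frac{p-1-\chi}{p-\chi}$, and the quantity $g:=\gamma a_1+\mu a_2-1$ satisfies $g(a_1^*,a_2^*)=0$ as you note, while for $a_1>a_1^*$ along the constraint
$$
g = (a_1-a_1^*)\Bigl(\gamma-\tfrac{p\mu}{\chi}\Bigr)\qquad(\chi>0),
$$
so that $g>0$ whenever $\gamma-p\mu/\chi>0$, which happens for all $\mu\le 0$ and, more generally, for every $\mu<\frac{\chi(p-\chi)}{p(p-\chi-1)+\chi}$ (and analogously with $\mu$ alone replacing $\gamma-p\mu/\chi$ when $\chi=0$). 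The theorem allows every $\mu<p-\chi$, including $\mu\le 0$, and for those the $R$-exponent $p_3\,g+\gamma s+Q$ is strictly positive for \emph{every} admissible choice of $(p_1,p_2,p_3,s)$, with the irreducible residue $\gamma s+Q>0$ persisting even in the limit $p_3\to\infty$. The factor $\omega_R^{\,s+p_3/p_1}$ cannot rescue this: the little-$o$ condition \eqref{opequeno} provides $\omega_R\to 0$ with \emph{no rate}, while $R^{\gamma s+Q}\to\infty$ at a polynomial rate, so the right-hand side of \eqref{planabsorb} need not tend to $0$. This is precisely why the paper cannot recycle the triple-Young argument and instead introduces the barrier: there, the decay of $F(v,r)$ on $B_{2R}\setminus B_R$ is driven by $v\ge(\alpha-b)(1+r)^\sigma$ being \emph{large} (either exponentially suppressed or a negative power), an effect that the ratio $\sup v/r^\gamma$ cannot reproduce without rate information. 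The final parenthetical about ``a careful L'H\^opital-type analysis at $\chi=(p-1)\mu$'' also misidentifies the critical locus. The second part of the theorem (applying the argument to $-u$) is fine, but inherits the same gap.
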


\begin{remark}
\emph{We will show that the range of parameters in \eqref{pararange}, \eqref{pararange_2} is sharp, as well as the growth condition \eqref{opequeno}. 
}
\end{remark}

\begin{remark}\label{rem_othercases}
\emph{If $\mu = p-\chi$, then the conclusions of Theorem \ref{teo_main_2} still hold if \eqref{opequeno} is replaced by the weaker requirement that $u$ be bounded above. On the other hand, suppose that $p>Q$, $Q$ being the homogeneous dimension of $\GG$. If $\mu< p-\chi$ and \eqref{opequeno} is not satisfied, but still 
\begin{equation}\label{ogrande}
u(x) = \mathcal{O}\left(r(x)^{\frac{p-\chi-\mu}{p-\chi-1}}\right) \qquad \text{as } \, r(x) \ra +\infty,
\end{equation}
then the conclusions hold provided that
\begin{equation}\label{bound_border4}
\frac{p-\chi-\mu}{p-\chi-1} \le \frac{p-Q}{p-1}.
\end{equation}
} 
\end{remark}

\begin{remark}\label{rem_liou}
\emph{In view of \eqref{ogrande} and \eqref{bound_border4}, it is interesting to compare Theorem \ref{teo_main_2} with Theorem 1.1 in  \cite{pucciserrin_2} (see also Theorem 10 in \cite{farinaserrin}). There, the authors obtain the constancy of solutions of \eqref{main_case_2} on $\R^Q$ (with the equality sign, and with $tf(t) \ge 0$ on $\R$ \footnote{It should be observed that assumption (1.3) in \cite{pucciserrin_2}, when rephrased for the inequality \eqref{main_case_2}, gives necessarily $\varphi(t) = Ct^{p-2}$. However, the above restriction does not appear in Theorem 10 of \cite{farinaserrin}, which considers the case $f(t) \equiv 0$.}) whenever $p> Q$ and 
\begin{equation}\label{opiccolo}
u(x) = o \left( r(x)^\frac{p-Q}{p-1}\right) \qquad \text{as } \, r(x) \ra +\infty.
\end{equation}
Note that condition \eqref{opiccolo} is sharp and related to the growth of the fundamental solution for the $p$-Laplacian (see \cite{pucciserrin_2} and Remark 10.3 in \cite{dambrosiomitidieri_2}). For $p \ge Q$, further interesting results can be found in \cite{dambrosiomitidieri_2} (Theorems 10.1 and  10.4 therein), where \eqref{opiccolo} is replaced with an asymptotic integral estimate. However, outside of the borderline case described in Remark \ref{rem_othercases}, the growth assumptions on $u$ in \cite{farinaserrin, pucciserrin_2, dambrosiomitidieri_2} are quite different from \eqref{opequeno}. In this respect, we emphasize that Theorem \ref{teo_main_2} does not require any restriction on $Q$, while on the other hand, in \cite{farinaserrin, pucciserrin_2, dambrosiomitidieri_2}, no bounds on $\chi,\mu$ of the type in \eqref{pararange_2} are needed. 
}
\end{remark}

\begin{remark}\label{rem_farinaserrinliou}
\emph{Other Liouville type results for slowly growing solutions of \eqref{main_case_2}, skew with Theorem \ref{teo_main_2}, have been proved in \cite{farinaserrin, farinaserrin_I}. There, the authors assume $0 \le \chi < p-1$ and that \eqref{assu_barf} be replaced by the stronger $tf(t) \ge C|t|^{\omega+1}$, for some $\omega>0$.
Theorems 11 and 12 in \cite{farinaserrin} consider, respectively, the case $\omega<p-\chi-1$ and $\omega = p-\chi-1$, and for each one the study involves various sub-cases, notably those with $\mu <p-\chi$. It is worth to remark that in all but one sub-case the dimension $Q$ plays a role. The unique exception is when $\omega = p-\chi-1$ and $\mu < p-\chi$, for which it is shown that any $u$ solving \eqref{main_case_2} with the equality sign is constant provided that it grows polynomially.
}
\end{remark}

\begin{remark}[\textbf{Operators admitting multiple values of $p$}]\label{rem_multiple}
\emph{Suppose that $\varphi$ satisfies (WpC) in \eqref{assumptions} for two different values $1<p_m< p_M$. Clearly, $\varphi$ enjoys (WpC) for each $p \in [p_m,p_M]$. This is the case, for instance, of the mean curvature operator, which satisfies the assumptions of Theorems \ref{teo_main} and \ref{teo_main_2} for each $p \in (1,2]$. Although it might seem that conditions \eqref{pararange} and \eqref{assu_main} (and, analogously \eqref{pararange_2} and \eqref{assum_main_2}) are skew as we vary $p \in [p_m,p_M]$, we are going to show that the choice $p=p_M$ is the best one, that is, the one granting the largest range of parameters. To see this in the setting of Theorem \ref{teo_main} (that of Theorem \ref{teo_main_2} being analogous), define $\eta = p-1-\chi$. Then, \eqref{pararange} and \eqref{assu_main} can be rewritten as
$$
\begin{array}{c}
0 \le \eta \le p-1, \qquad \mu < \eta+1, \qquad \omega > \eta \\[0.3cm]
b \ge C(1+r)^{-\mu}, \qquad \disp f(t) \ge Ct^\omega, \qquad l(t) \ge C \varphi(t)t^{-\eta}.
\end{array}
$$
Since $p$ just appears as an upper bound for $\eta$, a larger $p$ guarantees a greater range for $\eta$ (hence, for $\mu$). Note also that a larger $\eta$ allow a smaller lower bound for $l$ at infinity.
}
\end{remark}

Both Theorems \ref{teo_main} and \ref{teo_main_2} are very much in the spirit of the recent \cite{dambrosiomitidieri_2}, whose method rests on a reduction procedure based on Kato's inequality. We recall that, if $u$ solves \eqref{lageneral}, Kato's inequality is the inequality satisfied by $u_\gamma \doteq (u-\gamma)_+$ ($\gamma$ being a constant, typically $\gamma=0$). Our approach is similar, but we do not need to find a Kato inequality, we directly investigate a superlevel set $\{u>\gamma\}$. Our choice here is to emphasize the next aspect: in the case $l(0)>0$, if $u^*$ is attained and assuming that everything be smooth enough, evaluating \eqref{main_case_2} at a maximum point gives $f(u^*) \le 0$. Therefore, property $f(u^*)\le 0$ can be thought as the validity of a \emph{weak maximum principle at infinity}, according to the point of view adopted in \cite{rigolisalvatorivignati_3, PRS_MEM}, see also \cite{marivaltorta}.\par

In Theorems \ref{teo_main} and \ref{teo_main_2}, the smaller is $\varphi(t)t^{1-p}$ at infinity, the weaker is our requirement on $l(t)$ as $t \ra +\infty$, and this is particularly suited for mean curvature type operators. For this reason, we state the following direct corollary of Theorem \ref{teo_main}. Note that the mean curvature operator satisfies the assumptions in Theorem \ref{teo_main} for each $p \in (1,2]$, but in view of Remark \ref{rem_multiple} it is enough to choose $p=2$.

\begin{corollary}\label{teo_meancurvature}
Let $\GG$ be a Carnot group with homogeneous norm $r$, and fix $\mu,\chi,\omega \in \R$ such that 
\begin{equation}\label{ipo_chilsigma}
0 \le \chi \le 1, \qquad \mu < 2-\chi, \qquad \omega> 1-\chi.
\end{equation}
Consider $b,f,l$ satisfying \eqref{assumptions} and
\begin{equation}\label{assunzbfl_meancurv}
\begin{array}{ll}
\disp b \ge C(1+r)^{-\mu} & \qquad \text{on } \, \GG, \\[0.3cm]
\disp f(t) \ge Ct^\omega & \qquad \text{for } \, t \ \text{ large enough}, \\[0.2cm]
\disp l(t) \ge C\frac{t^{\chi}}{1+t} & \qquad \text{on } \, \R^+,
\end{array}
\end{equation}
for some constant $C>0$. Then, for each solution $u \in \lip_\loc(\GG)$ of 
\begin{equation}\label{meancurv_case}
\diver_0 \left( \frac{\nabla_0 u}{\sqrt{1+|\nabla_0 u|^2}}\right) \ge b(x)f(u)l(|\nabla_0u|) \qquad \text{on } \, \GG
\end{equation}
it holds $u^* < +\infty$, and $f(u^*) \le 0$ unless $u$ is constant. If further 
\begin{equation}
tf(t) \ge C|t|^{\omega+1} \qquad \text{for } \, |t| \, \text{ large enough,} 
\end{equation}
then each solution $u$ of \eqref{meancurv_case} with the equality sign satisfies $u \in L^\infty(\GG)$ and, if $u$ is not constant, $f(u^*) \le 0 \le f(u_*)$. 
\end{corollary}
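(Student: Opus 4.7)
The plan is to deduce the corollary as a direct application of Theorem \ref{teo_main} to the mean curvature operator, corresponding to $\varphi(t) = t/\sqrt{1+t^2}$ (i.e.\ the case $k=2$ of the generalized mean curvature operator recalled after \eqref{ipo_varphi}). First I would verify that this $\varphi$ satisfies the weak $p$-coercivity (WpC) in \eqref{assumptions}: clearly $\varphi \in C^0(\R^+_0)$ with $\varphi(0)=0$ and $\varphi>0$ on $\R^+$, while the bound $\varphi(t) \le C t^{p-1}$ on $\R^+$ is easily seen to hold precisely for $p \in (1,2]$ (controlling both the behaviour $\varphi(t)\sim t$ at the origin and the boundedness of $\varphi$ at infinity). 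By Remark \ref{rem_multiple}, among these admissible values the optimal choice is $p=2$, which maximizes the admissible range of $\chi$; with this choice the restrictions \eqref{ipo_chilsigma} on $\mu,\chi,\omega$ become literally \eqref{pararange}.

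It then remains to check that the lower bound on $l$ in \eqref{assunzbfl_meancurv} implies the corresponding one in \eqref{assu_main}. Substituting $\varphi(t)=t/\sqrt{1+t^2}$ and $p-1-\chi=1-\chi$, the requirement of Theorem \ref{teo_main} becomes
\[
l(t) \ge C\,\frac{\varphi(t)}{t^{1-\chi}} \;=\; C\,\frac{t^\chi}{\sqrt{1+t^2}}.
\]
Since on $\R^+_0$ one has $\sqrt{1+t^2} \le 1+t \le \sqrt{2}\,\sqrt{1+t^2}$, the functions $t^\chi/(1+t)$ and $t^\chi/\sqrt{1+t^2}$ are uniformly comparable, and the hypothesis $l(t) \ge C\,t^\chi/(1+t)$ delivers, up to adjusting the constant, the bound required by Theorem \ref{teo_main}. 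The assumptions on $b$ and $f$ in \eqref{assunzbfl_meancurv} match \eqref{assu_main} verbatim.

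Having verified all the hypotheses, the first conclusion of the corollary follows from the first part of Theorem \ref{teo_main}; the statement about $u$ being a solution with the equality sign, together with $u \in L^\infty(\GG)$ and $f(u^*) \le 0 \le f(u_*)$, is exactly the second part of Theorem \ref{teo_main} applied under the symmetric hypothesis $tf(t) \ge C|t|^{\omega+1}$. Since the argument is a pure verification of hypotheses, there is no substantive obstacle; the only point that needs care is the choice $p=2$ (rather than some smaller $p \in (1,2)$), which by Remark \ref{rem_multiple} is precisely what makes the range \eqref{ipo_chilsigma} as wide as possible while still respecting (WpC) for the mean curvature operator.
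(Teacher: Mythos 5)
Your proposal is correct and takes essentially the same approach as the paper: the paper labels this a ``direct corollary'' and, in the sentence preceding the statement, explains precisely the step you carry out, namely applying Theorem \ref{teo_main} to $\varphi(t)=t/\sqrt{1+t^2}$ with the choice $p=2$ justified by Remark \ref{rem_multiple}. Your verification that $t^\chi/(1+t)$ and $\varphi(t)/t^{1-\chi}=t^\chi/\sqrt{1+t^2}$ are comparable, and that \eqref{ipo_chilsigma} is then literally \eqref{pararange} with $p=2$, is exactly the hypothesis-checking the paper leaves implicit.
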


\begin{remark}\label{rem_belluz}
\emph{The third requirement in \eqref{ipo_chilsigma} is the sharp condition on $\omega$ to guarantee \eqref{KO} with $K(t)$ as in \eqref{modi_KO}. Indeed, because of \eqref{assunzbfl_meancurv}
\begin{equation}\label{calculis}
K(t) \lesssim t^{2-\chi}, \quad F(t) \gtrsim  t^{\omega+1}, \quad \frac{1}{K^{-1} \circ F} \lesssim t^{-\frac{\omega+1}{2-\chi}}  \qquad \text{as } \, t \ra +\infty.
\end{equation}
Note also that, if $\chi \in (0,1)$, $l(t)$ is allowed to go to zero both as $t \ra 0$ and as $t \ra +\infty$.
}
\end{remark}

Comparing with Theorem \ref{teo_farinaserrin} in the case of the mean curvature operator, and with $l(t)$ as in \eqref{assunzbfl_meancurv}, we observe that $\underline{\chi}$ in \eqref{parafarina} is $\chi-1$, whence the third in \eqref{condifarina} is met whenever $\underline{\omega} > 2-\chi$, a condition more demanding than $\omega >1-\chi$ in \eqref{ipo_chilsigma}. Furthermore, as in \cite{DAmbrMit, dambrosiomitidieri_2}, in our result the behaviour of $f(t)$ near $t=0$ is irrelevant.\par
Next, we focus on the link with Theorem \ref{teo_JFA}. First, in Corollary \ref{teo_meancurvature} none of $f,l$ is required to be $C$-increasing, and the range of $\mu$ in \eqref{ipo_chilsigma} is larger than the one in \eqref{range_JFA}. As stressed in Remark \ref{rem_strange!}, when $f(t) \ge Ct^\omega$ the conclusion of Theorem \ref{teo_JFA} holds if $\omega>-\chi$ (and $\omega \ge 0$), which is weaker than $\omega> 1-\chi$. This shift can be explained as follows: if we want to include the factor $(1+t)^{-1}$ in \eqref{assunzbfl_meancurv} in order to weaken the requirements on $l(t)$, the price to pay is an additional power in the asymptotic behaviour of $K(t)$ in \eqref{modi_KO}, and consequently the Keller-Osserman condition \eqref{KO} is more demanding on $\omega$. Note that we will produce counterexamples to show that $\omega> 1-\chi$ is sharp in the generality of Theorem \ref{teo_meancurvature}.\par
Eventually, we compare Theorem \ref{teo_meancurvature} with Corollary 1.4 in \cite{filippucci}. There, as said in Remark \ref{rem_fili}, solutions of \eqref{meancurv_case} with $l(t) = t^\chi$ are shown to be non-positive or constant provided that \eqref{range_filippucci} holds and $p \in (1,Q)$. In this respect, although the result grasps the sharp bound for $\mu$ in \eqref{ipo_chilsigma}, the above discussion now shows that condition $\omega> 1-\chi$ is \emph{not} sharp when $l(t) = t^\chi$, the sharp one being $\omega \ge 0$, $\omega > -\chi$.\\
\par

Theorem \ref{teo_main} covers the range
$$
0 \le \chi \le p-1, \qquad \mu < p-\chi, \qquad \omega> p-1-\chi.
$$
When $\chi > p-1$, namely, when $l(t)$ grows fast at infinity, things are quite different, 
as confirmed by the \emph{a-priori} estimate in \cite{dambrosiomitidieri_2} (Theorem 11.4), which fits very well with our results. There, the conclusions of Theorem \ref{teo_main} are shown to hold when \eqref{assu_main} is in force with $l(t) \ge t^\chi$ and $\omega = 0$, provided that
\begin{equation}\label{paracomchi}
\mu < 1, \qquad p-1 < \chi \le \left[\frac{Q-\mu}{Q-1}\right](p-1).
\end{equation}
Note that, as shown in Remark 11.8 of \cite{dambrosiomitidieri_2}, when $\mu=0$ the exponent $\frac{Q-\mu}{Q-1}(p-1)$ in \eqref{paracomchi} is sharp. Related interesting results, for possibly singular $b(x)$ and still in the range $\chi >p-1$, are given in \cite{lili}.

We conclude the paper with an appendix containing a version of the pasting lemma. This result enables to glue two solutions of \eqref{main_case_2} into a single one, even if one of them is not defined on the whole space. Since the only proof that we know is the one in \cite{Le} for $\R^n$, and contains an inessential assumption in our setting, we felt convenient to include a detailed argument. The pasting lemma is used below to produce counterexamples that show the sharpness of the parameters range in Theorems \ref{teo_main} and \ref{teo_main_2}.

\section{Proof of Theorem \ref{teo_main_2}}

The first step to prove Theorems \ref{teo_main} and \ref{teo_main_2} is to find a refined maximum principle for slowly growing solutions of 
$$
\Delta^\varphi_{\GG}u \ge K(1+r)^{-\mu}\frac{\varphi(|\gru|)}{|\gru|^{p-1-\chi}}
$$
on superlevel sets $\{u>\gamma\}$.

\begin{theorem}\label{teo_maximum}
Let $\GG$ be a Carnot group with homogeneous dimension $Q \ge 1$ and homogeneous norm $r$. Consider a $\varphi$-Laplace operator $\Delta_{\GG}^\varphi$ whose function $\varphi$ satisfies (WpC) in assumptions \eqref{assumptions} with $p>1$.\\
Fix $\mu, \sigma, \chi \in \R$ with the property that 
\begin{equation}\label{assu_musigmachi}
0 \le \chi < p-1, \qquad \mu \le p-\chi, \qquad 0 \le \sigma \le \sigma^* \doteq \frac{p-\chi-\mu}{p-\chi-1}, 
\end{equation}
and a function $u \in \lip_\loc(\GG)$ for which 
\begin{equation}\label{sup_u}
\hat u \doteq \limsup_{r(x) \ra +\infty} \frac{u_+(x)}{r(x)^\sigma} < +\infty.
\end{equation}
If, for some $\gamma \in \R$, the open set $\Omega_\gamma \doteq \{ u > \gamma\}$ is non-empty and $u$ is a non-constant, weak solution of 
\begin{equation}\label{ineq_superlevel_22}
\Delta^\varphi_{\GG}u \ge K(1+r)^{-\mu}\frac{\varphi(|\gru|)}{|\gru|^{p-1-\chi}} \qquad \text{on } \, \Omega_\gamma,
\end{equation}
then 
\begin{equation}
K \le H \cdot \hat u^{p-\chi-1},
\end{equation}
where $H = H(\sigma, \chi,p,\mu)$ is a constant satisfying
\begin{equation}\label{casi_C}
\begin{array}{l}
H=0 \quad \text{if either} \quad \left\{ \begin{array}{l}
\sigma < \sigma^*, \quad \text{or } \\[0.2cm] 
\sigma = \sigma^* = 0, \quad \text{or } \\[0.2cm] 
\sigma = \sigma^* > 0, \ \ (p-1)(\sigma-1) \le 1-Q 
\end{array}\right. \\[1cm]
\disp H = \sigma^{p-\chi-1}\big[(p-1)(\sigma-1)+Q-1\big] \quad \text{if } \, \sigma= \sigma^* >0, \ \  (p-1)(\sigma-1) > 1-Q.
\end{array}
\end{equation}

\end{theorem}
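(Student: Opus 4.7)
My plan is to follow the test-function approach of Mitidieri–Pohozaev and D'Ambrosio–Mitidieri hinted at in the introduction. Let $v\doteq(u-\gamma)_+\in\lip_\loc(\GG)$, so that $\{v>0\}=\Omega_\gamma$ and $\nabla_0 v=\nabla_0 u$ on $\Omega_\gamma$. Fix $R>0$ large and take a radial cutoff $\eta=\xi(r/R)\in\lip_c(\GG)$ with $\xi\equiv 1$ on $[0,1]$, $\xi\equiv 0$ on $[2,+\infty)$, $|\xi'|\leq C$; the bound $|\nabla_0 r|\leq 1$ gives $|\nabla_0\eta|\leq C/R$. Testing \eqref{ineq_superlevel_22} via Definition \ref{def_weakform} against
$$
\phi=\eta^\alpha v^\beta,\qquad \alpha\ge p,\ \beta>0
$$
(parameters to be tuned) and expanding $\nabla_0\phi$ yields, after rearranging,
$$
\beta\!\int\!\eta^\alpha v^{\beta-1}\varphi(|\gru|)|\gru|\,\di x + K\!\int\!\eta^\alpha v^\beta(1+r)^{-\mu}\frac{\varphi(|\gru|)}{|\gru|^{p-1-\chi}}\,\di x \leq \alpha\!\int\!\eta^{\alpha-1}v^\beta\varphi(|\gru|)|\nabla_0\eta|\,\di x.
$$

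Using (WpC) in the equivalent pointwise form $\varphi(t)t\ge c_1\varphi(t)^{p/(p-1)}$, the first LHS integrand is upgraded to $c_1\eta^\alpha v^{\beta-1}\varphi^{p/(p-1)}$. I then apply Young's inequality $ab\leq\varepsilon a^{p/(p-1)}+C_\varepsilon b^p$ to the right-hand side with
$$
a=\eta^{\alpha(p-1)/p}v^{(\beta-1)(p-1)/p}\varphi(|\gru|),\qquad b=\eta^{(\alpha-p)/p}v^{(\beta+p-1)/p}|\nabla_0\eta|,
$$
chosen so that $a^{p/(p-1)}$ reproduces the good integrand while $b^p=\eta^{\alpha-p}v^{\beta+p-1}|\nabla_0\eta|^p$ is $|\gru|$-free. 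Absorbing half of the good term yields
$$
cI_1+KI_2\le CX,\qquad X\doteq\int\eta^{\alpha-p}v^{\beta+p-1}|\nabla_0\eta|^p\,\di x, \qquad (\star)
$$
with $I_1\doteq\int\eta^\alpha v^{\beta-1}\varphi^{p/(p-1)}$ and $I_2\doteq\int\eta^\alpha v^\beta(1+r)^{-\mu}\varphi\,|\gru|^{-(p-1-\chi)}$. To isolate $K$, I interpolate via the Hölder bound $\int f_1^{a_1}f_2^{a_2}\leq I_1^{a_1}I_2^{a_2}$ with exponents $a_1=(p-\chi-1)/(p-\chi)$ and $a_2=1/(p-\chi)$, both non-negative since $0\le\chi<p-1$, summing to $1$. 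A short computation using (WpC) in the reverse form $|\gru|\geq c\,\varphi^{1/(p-1)}$ shows $f_1^{a_1}f_2^{a_2}\leq C\eta^\alpha v^{\beta-a_1}(1+r)^{-\mu a_2}\varphi(|\gru|)$ pointwise, with all powers of $|\gru|$ cancelling exactly; a final Young step handles the residual $\varphi$, converting $(\star)$ into
$$
K^{a_2}\int\eta^\alpha v^{\beta-a_1}(1+r)^{-\mu a_2}\,\di x \leq CX.
$$

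The proof concludes by scaling in $R$. On $\supp|\nabla_0\eta|\subset\{R\leq r\leq 2R\}$, the growth hypothesis \eqref{sup_u} yields $v\leq(\hat u+\varepsilon)r^\sigma$, so together with $|\nabla_0\eta|\leq C/R$ and $\vol(B_{2R})\leq CR^Q$ the right-hand side is of order $C(\hat u+\varepsilon)^{\beta+p-1}R^{\sigma(\beta+p-1)-p+Q}$, while the left-hand side, using $\eta\equiv 1$ on $B_R$, is bounded below by $cK^{a_2}R^{\sigma(\beta-a_1)-\mu a_2+Q}$. In the strictly subcritical regime $\sigma<\sigma^*$, a suitable choice of $\beta$ makes the $R$-exponent on the right strictly negative, so the limit $R\to+\infty$ forces $K=0$, i.e. $H=0$; the remaining degenerate subcases in \eqref{casi_C} are handled by the same mechanism. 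In the critical case $\sigma=\sigma^*>0$ with $(p-1)(\sigma-1)+Q-1>0$, the two $R$-exponents agree and the resulting finite inequality rearranges to $K\leq H\,\hat u^{p-\chi-1}$; tracking the Young/Hölder constants and optimizing $\beta$ to match the dimensional exponent $(p-1)(\sigma-1)+Q-1$ recovers the sharp value $H=\sigma^{p-\chi-1}[(p-1)(\sigma-1)+Q-1]$, in agreement with the formal computation of the $p$-Laplacian of the radial model $r^\sigma$. The main obstacle will be the interpolation step: since (WpC) supplies only the upper bound on $\varphi$, the Hölder exponents $a_1,a_2$ must be chosen precisely so that $|\gru|$ cancels and $\varphi$ appears with a usable non-negative power, and keeping track of all numerical constants to obtain the sharp $H$ (rather than a constant multiple of it) is the most delicate accounting.
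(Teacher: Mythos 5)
Your proposal follows a direct Mitidieri--Pohozaev test-function scheme with $\phi = \eta^\alpha\,(u-\gamma)_+^\beta$, which is a genuinely different route from the paper's. The paper does \emph{not} test against powers of $(u-\gamma)_+$: it translates $u$, introduces the \emph{upper barrier} $v = \alpha(1+r)^\sigma - u$, notes $v \ge (\alpha-b)(1+r)^\sigma$, and tests against $\zeta^p\lambda(u)F(v,r)$ where $F$ is a decreasing weight in $v$ --- exponential $F=\exp\{-\tau v(1+r)^{-\eta}\}$ when $\sigma<\sigma^*$, polynomial $F=v^{-\tau}$ when $\sigma=\sigma^*$. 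That lower bound on $v$ and the ensuing decay of $F$ is what closes the argument. Your route has two gaps that I do not see how to repair.

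First, the scaling step is unjustified. You claim the left-hand side of your final inequality is bounded below by $cK^{a_2}R^{\sigma(\beta-a_1)-\mu a_2+Q}$, which presumes $(u-\gamma)_+ \gtrsim r^\sigma$ on $B_R$. Nothing of the sort is available: $(u-\gamma)_+$ may vanish on most of $B_R$, so the best one can say is that the left-hand side is bounded below by a fixed positive constant (for $R \ge R_0$). In the paper's construction this issue is precisely why the barrier $v = \alpha(1+r)^\sigma - u \ge (\alpha-b)(1+r)^\sigma$ is used: $v$ there carries a genuine two-sided polynomial control, whereas your $v$ is only controlled from above. Consequently, even accepting your $(\star)$ and dropping the $K$-term, you would be comparing a constant against $CR^{\sigma(\beta+p-1)-p+Q}$; for this to force $K\le 0$ in the subcritical case you would need $\sigma(\beta+p-1)<p-Q$, which is impossible for any $\beta>0$ as soon as $Q>p$. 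The paper evades this obstruction entirely via the exponential factor $\exp\{-\tau(\alpha-b)(1+R)^{\sigma-\eta}\}$, which beats any power of $R$ and makes the subcritical case $Q$-independent. A polynomial test function cannot reproduce this.

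Second, the Hölder/Young interpolation step is directionally confused. With $f_1=\eta^\alpha v^{\beta-1}\varphi^{p/(p-1)}$, $f_2=\eta^\alpha v^\beta(1+r)^{-\mu}\varphi|\gru|^{-(p-1-\chi)}$, $a_1=\tfrac{p-\chi-1}{p-\chi}$, $a_2=\tfrac{1}{p-\chi}$, and using the one-sided (WpC) bound $|\gru|\ge c\,\varphi^{1/(p-1)}$ on the \emph{negative} power of $|\gru|$, you obtain the \emph{upper} bound
$$
f_1^{a_1}f_2^{a_2}\;\le\; C\,\eta^\alpha v^{\beta-a_1}(1+r)^{-\mu a_2}\,\varphi(|\gru|),
$$
which, combined with Hölder $\int f_1^{a_1}f_2^{a_2}\le I_1^{a_1}I_2^{a_2}$, gives upper estimates on both sides of the same quantity and leaves you with nothing. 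To convert $(\star)$ into $K^{a_2}\int(\cdots)\le CX$ you would need a \emph{lower} bound on $f_1^{a_1}f_2^{a_2}$ by a clean quantity, but (WpC) supplies only an upper bound on $\varphi$, so no such pointwise lower bound exists. Moreover, the claimed final integral $\int\eta^\alpha v^{\beta-a_1}(1+r)^{-\mu a_2}\,\di x$ has no $\varphi$ in it, while every term in $(\star)$ does; since $\varphi$ vanishes wherever $\nabla_0 u=0$, a $\varphi$-free integral cannot be controlled by $(\star)$ in any generality. In short, the test function $\phi=\eta^\alpha(u-\gamma)_+^\beta$ with the interpolation you propose cannot deliver the two-sided decay needed, and the sharp constant $H=\sigma^{p-\chi-1}[(p-1)(\sigma-1)+Q-1]$ in the critical case emerges in the paper only from the lower bound $v\ge(\alpha-b)(1+r)^\sigma$ combined with $F=v^{-\tau}$ and the minimization over $\tau$ and $\alpha/b$ --- ingredients entirely absent from your set-up.
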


\begin{proof}
Without loss of generality, we can rescale $r$ in such a way that $|\nabla_0r| \le 1$. Set for convenience
\begin{equation}\label{def_Gteo}
S(t) \doteq \frac{t^{p-1}}{\varphi(t)},
\end{equation}
and note that, by (WpC), $S_* \doteq \inf_\GG S >0$. Inequality \eqref{ineq_superlevel_22} can be rewritten as
\begin{equation}\label{ineq_superlevel}
\Delta^\varphi_{\GG}u \ge K(1+r)^{-\mu}\frac{|\gru|^\chi}{S(|\gru|)}.
\end{equation}
Moreover, we can suppose $K>0$, otherwise the estimate is trivial. Note that \eqref{ineq_superlevel} is invariant with respect to translations $u \mapsto u_s \doteq u+s$. Fix $b > \hat u$. We claim that a suitable translated $u_s$ satisfies 
\begin{equation}
u_s \le b(1+r)^\sigma \qquad \text{on } \, M, \qquad u_s>0 \ \text{ somewhere.}
\end{equation}
Indeed, if $\sigma>0$, \eqref{sup_u} implies that $u < b(1+r)^\sigma$ outside a large compact set $\Omega$, and translating $u$ downwards we can achieve the same inequality also in $\Omega$, still keeping $u_s>0$ somewhere. On the other hand, the claim is obvious if $\sigma=0$. In this last case, note that here we do \emph{not} claim that $\hat u$ is not attained: this would follow from a strong maximum principle, which seems to be not known under the sole assumption (WpC). Using that the resulting $u_s$ is positive somewhere, we can also assume $\gamma\ge 0$. Hereafter, computations will be performed with $u=u_s$. Set for convenience 
\begin{equation}\label{def_eta}
\eta \doteq \mu + (\sigma-1)(p-\chi),
\end{equation}
and note that
\begin{equation}\label{ine_eta}
\begin{array}{lcl}
\disp \sigma \le \frac{p-\chi-\mu}{p-\chi-1} & \Longleftrightarrow & \sigma \ge \eta.
\end{array}
\end{equation}
Choose $\alpha>b$ and define
$$
v(x) = \alpha\big(1+r(x)\big)^\sigma - u(x),
$$
so that
\begin{equation}\label{def_v}
(\alpha-b)(1+r)^\sigma \le v \le \alpha(1+r)^\sigma \qquad \text{on } \, \Omega_\gamma. 
\end{equation}
Fix a function $\lambda \in C^1(\R)$ such that
$$
0 \le \lambda \le 1, \quad \lambda \equiv 0 \ \  \text{ on } \, (-\infty, \gamma], \qquad \lambda>0 \ \ \text{on } \, (\gamma, \infty), \quad \lambda' \ge 0,
$$
and a cut-off function $\zeta \in \lip_c(M)$ to be specified later, whose support has nontrivial intersection with $\Omega_\gamma$. Next, consider $F \in C^1(\R^2)$, $F=F(r,v)$, satisfying
\begin{equation}\label{propriety_F}
F(r,v)>0, \qquad F_v \doteq \frac{\partial F}{\partial v}(r,v) < 0.
\end{equation}
We insert the test function
$$
\zeta^{p} \lambda(u) F(v, r)  \in \lip_c(\overline{\Omega}_\gamma)
$$
in the weak definition of \eqref{ineq_superlevel}. Using $\lambda' \ge 0$ together with the Cauchy-Schwarz inequality, $|\grr| \le 1$ and taking into account \eqref{def_Gteo}, we obtain
\begin{equation}
\begin{array}{l}
\disp K \int \zeta^{p} \lambda F (1+r)^{-\mu}\frac{|\gru|^\chi}{S(|\gru|)} \le \disp p\int \zeta^{p-1} \lambda F \varphi(|\gru|)|\grho| + \int \zeta^p \lambda F_v\varphi(|\gru|)|\gru| \\[0.5cm]
\disp \qquad \qquad + \int \zeta^{p}\lambda \varphi(|\gru|)\left|\alpha\sigma (1+r)^{\sigma-1}F_v + F_r\right|  \\[0.5cm]
\qquad \qquad \le \disp p \int \zeta^{p-1} \lambda F \frac{|\gru|^{p-1}}{S(|\gru|)}|\grho| + \int \zeta^p \lambda F_v \frac{|\gru|^p}{S(|\gru|)} \\[0.5cm]
\disp \qquad \qquad + \int \zeta^{p}\lambda \frac{|\gru|^{p-1}}{S(|\gru|)} \left|\alpha\sigma (1+r)^{\sigma-1}F_v + F_r\right|. 
\end{array}
\end{equation}
Rearranging,
\begin{equation}\label{basepoint}
\begin{array}{l}
\disp \int \zeta^{p} \lambda \left|F_v\right| B(x,u) \le p\int \zeta^{p-1} \lambda F \frac{|\gru|^{p-1}}{S(|\gru|)}|\grho|,
\end{array}
\end{equation}
with 
\begin{equation}\label{def_B}
\begin{array}{lcl}
B(x,u) & = & \disp K(1+r)^{-\mu} \frac{F}{|F_v|}\frac{|\gru|^\chi}{S(|\gru|)} + \frac{|\gru|^p}{S(|\gru|)} \\[0.5cm]
& & \disp - \frac{|\gru|^{p-1}}{S(|\gru|)}\left|-\alpha\sigma (1+r)^{\sigma-1} + \frac{F_r}{|F_v|}\right| \\[0.5cm]
 & = & \disp \frac{|\gru|^\chi}{S(|\gru|)}\left[K(1+r)^{-\mu} \frac{F}{|F_v|} + |\gru|^{p-\chi}\right. \\[0.5cm]
& & \disp \left. - |\gru|^{p-1-\chi}\left|-\alpha\sigma (1+r)^{\sigma-1} + \frac{F_r}{|F_v|}\right| \right]. 
\end{array}
\end{equation}
Let assume the validity of the following
\begin{equation}\label{lower_boundB}
\emph{claim: } \quad B(x,u) \ge \Lambda \frac{|\gru|^p}{S(|\gru|)} \qquad \text{for some  $\Lambda>0$ independent of $r$.} 
\end{equation}
Plugging into \eqref{basepoint} gives
\begin{equation}
\begin{array}{l}
\disp \frac{\Lambda}{p} \int \zeta^{p} \lambda \left|F_v\right| \frac{|\gru|^p}{S(|\gru|)} \le \int \zeta^{p-1} \lambda F \frac{|\gru|^{p-1}}{S(|\gru|)}|\grho|,
\end{array}
\end{equation}
and thus, by H\"older inequality, 
\begin{equation}\label{basestep}
\begin{array}{lcl}
\disp \frac{\Lambda^p}{p^p} \int \zeta^{p} \lambda \left|F_v\right| \frac{|\gru|^p}{S(|\gru|)} & \le 
& \disp \int \frac{\lambda F}{S(|\gru|)} \left( \frac{F}{|F_v|}\right)^{p-1}|\grho|^p \\[0.5cm]
& \le & \disp S_*^{-1} \int \lambda F \left( \frac{F}{|F_v|}\right)^{p-1}|\grho|^p.
\end{array}
\end{equation}
Fix $R_0$ large enough that $u$ is not constant on $\Omega_\gamma \cap B_{R_0}\neq \emptyset$. We claim that the horizontal gradient $\nabla_0 u$ is not identically zero on $\Omega_\gamma \cap B_{R_0}$. Otherwise, since the first layer generates the whole Lie Algebra of $\GG$, the whole Euclidean gradient of $u$ would be zero, hence $u$ would be constant on connected components of $\Omega_\gamma$. By the very definition of $\Omega_\gamma$, this would imply that $\Omega_\gamma \equiv \GG$ and $u$ be constant, contradiction. For $R > 2R_0$, we choose $\zeta$ in such a way that
\begin{equation}\label{specipsi}
0 \le \zeta \le 1, \quad \zeta \equiv 1 \ \text{ on } B_{R}, \quad \supp(\zeta) \subset B_{2 R}, \quad |\grho| \le \frac{C}{R},
\end{equation}
for some absolute constant $C$. 
Inserting into \eqref{basestep} and recalling that $\lambda \le 1$ we obtain 
\begin{equation}\label{quasifinale}
\begin{array}{lcl}
\disp \frac{\Lambda^p}{p^p} \int_{\Omega_\gamma \cap B_{R_0}} \lambda \left|F_v\right| \frac{|\gru|^p}{S(|\gru|)} & \le & \disp \frac{C^p}{S_*R^p} \int_{(B_{2 R}\backslash B_R)\cap \Omega_\gamma} F \left( \frac{F}{|F_v|}\right)^{p-1}.
\end{array}
\end{equation}
We now need to check the validity of the claim in \eqref{lower_boundB}, for a suitable choice of $F$. Observe that the expression in square brackets in \eqref{def_B} is a function of the type 
$$
g(s)=\rho + s^{p-\chi} - \beta s^{p-\chi-1}, 
$$
for $s = |\gru|$ and positive parameters 
$$
\rho \doteq K(1+r)^{-\mu} \frac{F}{|F_v|}, \qquad \beta \doteq \left|-\alpha\sigma (1+r)^{\sigma-1} + \frac{F_r}{|F_v|}\right|
$$
depending on $r$. It is a calculus exercise to check that $g(s) \ge \Lambda s^{p-\chi}$ on $\mathbb R^+_0$ when
\begin{equation}\label{ine_lambda}
\Lambda \le 1 - \frac{p-\chi-1}{(p-\chi)^{\frac{p-\chi}{p-\chi-1}}}\left( \frac{\beta^{p-\chi}}{\rho}\right)^{1/(p-\chi-1)}.
\end{equation}
Inequality \eqref{ine_lambda} yields \eqref{lower_boundB} provided that $\beta^{p-\chi}/\rho$ is bounded from above by a (small enough) quantity independent of $r$. This suggests our choice of $F$, that will be different from case to case.\\[0.2cm] 
\noindent \emph{First case: } $\sigma>\eta$ (that is, $\sigma < \sigma^*$).\\
We choose 
\begin{equation}
F(v,r) = \exp\left\{ - \tau v(1+r)^{-\eta}\right\},
\end{equation}
for a real number $\tau>0$ to be specified later. Then, on $\Omega_\gamma$
$$
\frac{F}{|F_v|} = \frac{(1+r)^\eta}{\tau}, \qquad  \frac{F_r}{|F_v|} = \frac{v\eta}{(1+r)},
$$
and hence, by \eqref{def_v} and using $\sigma>\eta$,
$$
\begin{array}{ll}
\disp -\alpha(\sigma-\eta)(1+r)^{\sigma-1} \le -\alpha\sigma (1+r)^{\sigma-1} + \frac{F_r}{|F_v|} \le 0 & \quad \text{if } \, \eta<0. \\[0.5cm]
\disp \left[-\alpha(\sigma-\eta)-\eta b\right](1+r)^{\sigma-1} \le -\alpha\sigma (1+r)^{\sigma-1} + \frac{F_r}{|F_v|} \le 0 & \quad \text{if } \, \eta \ge 0
\end{array}
$$
Plugging into \eqref{def_B} we get
\begin{equation}\label{lowbounds_B}
B(x,u) \ge \disp \frac{|\gru|^\chi}{S(|\gru|)}\left[\frac{K}{\tau}(1+r)^{\eta-\mu} + |\gru|^{p-\chi} - |\gru|^{p-1-\chi}\beta^*(1+r)^{\sigma-1} \right], 
\end{equation}
with 
$$
0 < \beta^* = \left\{ \begin{array}{ll}
\alpha(\sigma-\eta) & \quad \text{if } \, \eta<0 \\[0.3cm]
\alpha(\sigma-\eta)+ \eta b & \quad \text{if } \, \eta \ge 0.
\end{array}\right.
$$
In view of identity \eqref{def_eta}, inequality \eqref{ine_lambda} applied with 
$$
\rho \doteq \frac{K}{\tau}(1+r)^{\eta-\mu}, \qquad \beta \doteq \beta^*(1+r)^{\sigma-1}
$$
gives
\begin{equation}\label{ine_lambda_2}
\Lambda \le 1 - (p-\chi-1)\left(\frac{\beta^*}{p-\chi}\right)^{\frac{p-\chi}{p-\chi-1}}\left( \frac{\tau}{K}\right)^{\frac{1}{p-\chi-1}}.
\end{equation}
For $\theta \in (0,1)$, choosing $\Lambda,\tau$ in such a way that 
$$
\Lambda = 1-\theta, \qquad \tau = \theta^{p-\chi-1} \frac{K}{(\beta^*)^{p-\chi}}\frac{(p-\chi)^{p-\chi}}{(p-\chi-1)^{p-\chi-1}},
$$
then \eqref{ine_lambda_2} is met with the equality sign and the claim in \eqref{lower_boundB} is proved. By our choice of $F$, \eqref{quasifinale} becomes
\begin{equation}\label{quasifinale_0}
\begin{array}{lcl}
\disp \left(\frac{\tau\Lambda}{p}\right)^p \int_{B_{R_0}\cap \Omega_\gamma} \lambda F(1+r)^{-\eta} \frac{|\gru|^p}{S(|\gru|)} & \le & \disp \frac{C^p}{S_*R^p} \int_{(B_{2R}\backslash B_R)\cap \Omega_\gamma} F(1+r)^{|\eta|(p-1)}.
\end{array}
\end{equation}
However, on $(B_{2R}\backslash B_R)\cap \Omega_\gamma$, since $\sigma-\eta >0$, \eqref{def_v} gives
$$
F(v,r) \le \exp\left(-\tau(\alpha-b)\big( 1+ R\big)^{\sigma-\eta}\right), \qquad (1+r)^{|\eta|(p-1)} \le (1+2R)^{|\eta|(p-1)}.
$$
Inserting into \eqref{quasifinale_0} and using \eqref{volgrowth}, we eventually get
\begin{equation}\label{quasifinale_2}
\begin{array}{lcl}
0 & < & \disp \left(\frac{\tau\Lambda}{p}\right)^{p} \int_{\Omega_\gamma \cap B_{R_0}}\lambda F(1+r)^{-\eta}\frac{|\gru|^p}{S(|\gru|)}\\[0.5cm]
& \le & \disp  \frac{C^{p}}{S_*R^{p}}\exp\left(-\tau(\alpha-b)\big( 1+ R\big)^{\sigma-\eta}\right)(1+ 2R)^{|\eta|(p-1)}\vol(B_{2R}) \\[0.5cm]
& & \longrightarrow 0 \quad \text{as } \, R \ra +\infty.
 \end{array}
\end{equation}
Therefore, letting $R\ra +\infty$ we deduce 
$$
\int_{\Omega_\gamma \cap B_{R_0}}\lambda F(1+r)^{-\eta}\frac{|\gru|^p}{S(|\gru|)} = 0.
$$
However, since $\gru \not\equiv 0$ on $\Omega_\gamma \cap B_{R_0}$ and $F>0$, the above integral cannot be zero. Concluding, when $\sigma<\sigma^*$ our assumption $K>0$ leads to a contradiction, hence $K\le 0$, as was to be proved.\\[0.2cm]
\noindent \emph{Second case: } $\sigma=\eta$ (that is, $\sigma = \sigma^*$).\\
In this case, by \eqref{def_eta} it holds $\sigma-\mu=(\sigma-1)(p-\chi)$. We choose 
\begin{equation}
F(v,r) = v^{-\tau}, 
\end{equation}
$\tau>0$ to be determined. Then, using \eqref{def_v},
\begin{equation}\label{lowbounds_Bpol}
\begin{array}{lcl}
B(x,u) & \ge & \disp \frac{|\gru|^\chi}{S(|\gru|)}\left[\frac{K}{\tau}(1+r)^{-\mu}v + |\gru|^{p-\chi}\right. \\[0.5cm]
& & \disp \left. - |\gru|^{p-1-\chi}\alpha\sigma(1+r)^{\sigma-1} \right] \\[0.5cm]
 & \ge & \disp \frac{|\gru|^\chi}{S(|\gru|)}\left[\frac{K(\alpha-b)}{\tau}(1+r)^{\sigma-\mu} + |\gru|^{p-\chi}\right. \\[0.5cm]
& & \disp \left. - |\gru|^{p-1-\chi}\alpha\sigma(1+r)^{\sigma-1} \right], 
\end{array}
\end{equation}
and, by \eqref{ine_lambda}, \eqref{lower_boundB} holds whenever
\begin{equation}\label{ine_lambda_4}
\Lambda \le 1 - (p-\chi-1)\left(\frac{\alpha\sigma}{p-\chi}\right)^{\frac{p-\chi}{p-\chi-1}}\left( \frac{\tau}{K(\alpha-b)}\right)^{\frac{1}{p-\chi-1}}.
\end{equation}
For $\theta \in (0,1)$, set 
\begin{equation}\label{policaso}
\Lambda = 1-\theta, \qquad \tau = \theta^{p-\chi-1} \frac{(p-\chi)^{p-\chi}}{(p-\chi-1)^{p-\chi-1}} \frac{K(\alpha-b)}{(\alpha \sigma)^{p-\chi}}
\end{equation}
in order to satisfy \eqref{ine_lambda_4} with the equality sign. Inequality \eqref{quasifinale} now reads 
\begin{equation}\label{quasifinale_5}
\begin{array}{lcl}
\disp \frac{\Lambda^p}{p^p} \int_{\Omega_\gamma \cap B_{R_0}} \lambda \left|F_v\right| \frac{|\gru|^p}{S(|\gru|)} & \le & \disp \frac{C^p}{S_*R^p} \int_{(B_{2R}\backslash B_R)\cap \Omega_\gamma} v^{-\tau} \left( \frac{v}{\tau}\right)^{p-1} \\[0.5cm]
 & \le & \disp \frac{C^p}{\tau^{p-1}R^p} \int_{(B_{2R}\backslash B_R)\cap \Omega_\gamma} v^{p-1-\tau}. \\[0.5cm]
\end{array}
\end{equation}
Estimating $v$ with the aid of \eqref{def_v} and according to the sign of $p-1-\tau$, and using \eqref{volgrowth}, the right-hand side is bounded from above by 
$$
C_1R^{Q-p+ \sigma(p-1-\tau)}, 
$$
for a suitable constant $C_1>0$. Letting $R \ra +\infty$, for \eqref{quasifinale_5} to be compatible it is necessary that $Q - p + \sigma(p-1-\tau) \ge 0$, that is, 
\begin{equation}\label{finla}
\tau \sigma \le Q-1 + (p-1)(\sigma-1). 
\end{equation}
\begin{itemize}
\item[(i)] If $Q-1 + (p-1)(\sigma-1) <0$ or $Q-1 + (p-1)(\sigma-1)  = 0$ and $\sigma >0$, then there exists no $\tau>0$ satisfying the inequality. Therefore, $K>0$ leads to a contradiction, as required. This is the third case for $H=0$ in \eqref{casi_C}.

\item[(ii)] If $Q-1 + (p-1)(\sigma-1) \ge 0$ then, setting $\alpha = tb$ for $t > 1$ in expression \eqref{policaso} for $\tau$, inserting \eqref{policaso} into \eqref{finla} and solving \eqref{finla} with respect to $K$, letting $\theta \uparrow 1$ and $b \downarrow \hat u$ we deduce
$$
K \le  \left[Q-1 + (\sigma-1)(p-1)\right] \frac{(p-\chi-1)^{p-\chi-1}}{(p-\chi)^{p-\chi}}\hat u^{p-\chi-1}\sigma^{p-\chi-1} \frac{t^{p-\chi}}{t-1},
$$
and minimizing over $t \in (1,+\infty)$ we get
\begin{equation}\label{allafine}
K \le \left[Q-1 + (\sigma-1)(p-1)\right] \hat u^{p-\chi-1}\sigma^{p-\chi-1}.
\end{equation}
This concludes the case $Q-1 + (p-1)(\sigma-1) > 0$ and $\sigma= \sigma^*>0$. To deal with the remaining case $Q-1 + (p-1)(\sigma-1) \ge 0$ and $\sigma = \sigma^* = 0$, we can consider a downward translation $u_s$ of $u$ in place of $u$, and $\gamma = 0$. Then, $u_s$ satisfies \eqref{ineq_superlevel} with the same constant $K$, hence \eqref{allafine} holds for each $\hat u_s$. However, $\hat u_s$ can be made as small as we wish, and since we have assumed $K>0$ this would contradict \eqref{allafine}. Concluding, necessarily $K \le 0$, as required.
\end{itemize}
\end{proof}

\begin{remark} 
\emph{Observe that, while the techniques in \cite{MitidPohoz, DAmbrMit, farinaserrin, dambrosiomitidieri_2} seem to need a polynomial volume growth of balls to conclude sharp Liouville properties, our approach in Proposition \ref{teo_maximum} above is flexible enough to handle very general Riemannian manifolds, in particular, those for which $\vol(B_R)$ grows exponentially like the hyperbolic space. The Riemannian setting will be the subject of future investigation, see \cite{BMPR}.
}
\end{remark}

\subsection*{Sharpness of Proposition \ref{teo_maximum} and Theorem \ref{teo_main_2}}

We consider the mean curvature operator in $(\R^Q,\metric)$ with $Q \ge 2$, for which $p=2$ and $S(t) = \sqrt{1+t^2}$ in \eqref{def_Gteo}. Our aim is to produce solutions of
$$
\diver\left( \frac{\nabla u}{\sqrt{1+|\nabla u|^2}}\right) \ge \bar C(1+r)^{-\mu} \frac{|\nabla u|^\chi}{\sqrt{1+|\nabla u|^2}} \qquad \text{on } \, \R^Q
$$
satisfying $u(x) = \mathcal{O}\big(r(x)^\sigma\big)$, outside of the range where $H=0$ in \eqref{casi_C}. More precisely, we produce such solutions when:
\begin{equation}\label{complerange}
\disp 0 \le \chi <1, \qquad \mu < 2-\chi, \qquad \sigma \ge \sigma^* \doteq \frac{2-\chi-\mu}{1-\chi}.
\end{equation}
We underline that, in our setting, $(p-1)(\sigma-1) = \sigma -1 > 1-Q$. Hence, in the borderline case $\sigma = \sigma^*$ we are in the range complementary to the one that imply $H=0$.\par
Fix a smooth, non-decreasing  function $h \in C^\infty(\R^+_0)$, and consider $u(x) = h(r^2(x))$ with $r(x) = |x|$. Then, since $\Hess(r^2) = 2 \metric$,
$$
\nabla u = 2rh' \nabla r, \quad |\nabla u|^2= 4r^2(h')^2, \qquad \nabla \di u = 4r^2h'' \di r \otimes \di r + 2h' \metric,
$$
hence, using that $h' \ge 0$,
\begin{equation}\label{cumputa}
\begin{array}{l}
\disp \diver\left( \frac{\nabla u}{\sqrt{1+|\nabla u|^2}}\right) = \disp \frac{\Delta u}{\sqrt{1+|\nabla u|^2}} - \frac{\mathrm{Hess}u(\nabla u, \nabla u)}{(1+|\nabla u|^2)^{3/2}} \\[0.5cm]
\disp \quad = \frac{1}{\sqrt{1+|\nabla u|^2}} \left[ 2(Q-1)h' + \frac{4r^2h''+ 2h'}{1+4r^2(h')^2}\right]
\end{array}
\end{equation}
Choosing $h(t) = (1+t)^{\sigma/2}$, for some $\sigma>0$, we obtain  
\begin{equation}\label{basamento}
\disp \diver\left( \frac{\nabla u}{\sqrt{1+|\nabla u|^2}}\right) \ge \frac{\sigma(1+r^2)^{\frac{\sigma-4}{2}}}{\sqrt{1+|\nabla u|^2}} \left[ (Q-1)(1+r^2) + \frac{1+r^2(\sigma-1)}{1+\sigma^2r^2(1+r^2)^{\sigma-2}}\right]. 
\end{equation}
If $\sigma \ge 1$, we can get rid of the second term in square brackets. On the other hand, if $\sigma \in (0,1)$, 
$$
\begin{array}{lcl}
\disp \diver\left( \frac{\nabla u}{\sqrt{1+|\nabla u|^2}}\right) & \ge & \disp \frac{\sigma(1+r^2)^{\frac{\sigma-4}{2}}}{\sqrt{1+|\nabla u|^2}} \left[ (Q-1)(1+r^2) + (1+r^2)(\sigma-1)\right] \\[0.5cm]
& = & \disp  \frac{\sigma(1+r^2)^{\frac{\sigma-4}{2}}}{\sqrt{1+|\nabla u|^2}} (Q+ \sigma -2)(1+r^2). 
\end{array}
$$
Summarizing, for each $\sigma>0$ there exists a constant $C(\sigma, Q)$ such that 
\begin{equation}\label{semplice!}
\diver\left( \frac{\nabla u}{\sqrt{1+|\nabla u|^2}}\right) \ge \frac{C(1+r)^{\sigma-2}}{\sqrt{1+|\nabla u|^2}} \qquad \text{on } \, \R^Q.
\end{equation}
Now, consider parameters $\chi,\mu,\sigma$ in the range prescribed in \eqref{complerange}. Since 
$$
|\nabla u| = \sigma r(1+r^2)^{\frac{\sigma-2}{2}} \le \sigma(1+r^2)^{\frac{\sigma-1}{2}} \le C_1(1+r)^{\sigma-1}
$$
we deduce, by the third in \eqref{complerange},
\begin{equation}\label{gradu}
|\nabla u|^\chi(1+r)^{-\mu} \le C_2(1+r)^{\chi(\sigma-1)-\mu} \le C_3(1+r)^{\sigma-2},
\end{equation}
and inserting into \eqref{semplice!} we infer
\begin{equation}\label{semplice!!}
\diver\left( \frac{\nabla u}{\sqrt{1+|\nabla u|^2}}\right) \ge C_4(1+r)^{-\mu} \frac{|\nabla u|^\chi}{\sqrt{1+|\nabla u|^2}} \qquad \text{on } \, \R^Q,
\end{equation}
for some constant $C_4>0$. Therefore, if $Q \ge 2$ the range where $H=0$ in \eqref{casi_C} cannot be improved. 

\subsection*{Proof of Theorem \ref{teo_main_2}}

Suppose, by contradiction, that either $u$ is not bounded above or that $f(u^*)>0$. In both of the cases, we can find $\gamma < u^*$ such that $f(t) \ge C$ for $t>\gamma$, for some constant $C>0$ (in the first case, by using the second in \eqref{assum_main_2}). Therefore, because of our assumptions on $b,f,l$, $u$ turns out to be a non-constant solution of
$$
\Delta_\GG^\varphi u \ge K(1+r)^{-\mu}\frac{\varphi(|\gru|)}{|\gru|^{p-1-\chi}} \qquad \text{on } \, \{ u>\gamma\},
$$
for some $K>0$. Applying Proposition \ref{teo_maximum} and taking into account that, in our assumptions, $\sigma = \sigma^*>0$ and $\hat u=0$, we deduce that $K \le 0$, contradiction. As for the second part of Theorem \ref{teo_main_2}, if $u$ solves 
$$
\Delta_\GG^\varphi u = b(x) f(u)l(|\gru|),
$$
then $-u$ is a solution of 
$$
\Delta_\GG^\varphi v = b(x) \bar f(v)l(|\nabla_0v|), \qquad \text{with } \, \bar f(t) \doteq -f(-t).
$$
Because of \eqref{assu_barf}, $\bar f(t) \ge C$ for $t$ large, and we can apply again Proposition \ref{teo_maximum}, now to $-u$, to deduce that $(-u)$ is bounded above and $\bar f((-u)^*) \le 0$. In other words, $f(u_*) \ge 0$, which concludes the proof.\\
\par
We conclude by commenting on Remark \ref{rem_othercases}:
\begin{itemize}
\item[-] if $\mu = p-\chi$, then we can apply Proposition \ref{teo_maximum} with $\sigma = \sigma^*=0$ to deduce that $K \le 0$ (hence, all of our conclusions) provided that  \eqref{sup_u} holds, that is, if $u$ is bounded above;
\item[-] if \eqref{opequeno} is not satisfied, but still
$$
u_+(x) = \mathcal{O}\left( r(x)^{ \frac{p-\chi-\mu}{p-\chi-1}}\right) \qquad \text{as } \, r(x) \ra +\infty,
$$
then we are in the case $\sigma = \sigma^*>0$ and $\hat u>0$ of Theorem \ref{teo_main_2}. We obtain that $K \le 0$ whenever
$$
(\sigma^*-1)(p-1) \le 1-Q, \qquad \text{that is,} \qquad \frac{p-\chi-\mu}{p-\chi-1} \le 1 - \frac{Q-1}{p-1} = \frac{p-Q}{p-1},
$$
as claimed.
\end{itemize}

\section{Proof of Theorem \ref{teo_main}}

We begin with the following proposition. The idea of the proof is an adaptation of the one in Lemma 2.2 of \cite{farinaserrin}.

\begin{proposition}\label{lem_importante}
Let $\GG$ be a Carnot group with homogeneous dimension $Q \ge 1$ and homogeneous norm $r$. Consider a $\varphi$-Laplace operator $\Delta_{\GG}^\varphi$ whose function $\varphi$ satisfies (WpC) in assumptions \eqref{assumptions} with $p>1$.\\
Fix $\mu, \omega, \chi \in \R$ with the property that 
\begin{equation}\label{assu_musigmachi_33}
0 \le \chi \le p-1, \qquad \mu < p-\chi, \qquad \omega > p-\chi-1. 
\end{equation}
Then, for each $\gamma \ge 0$ and $K>0$, there exists no non-constant weak solution $u \in \lip_\loc(\GG)$ of 
\begin{equation}\label{ineq_superlevel_33}
\Delta^\varphi_{\GG}u \ge K(1+r)^{-\mu}u^\omega\frac{\varphi(|\gru|)}{|\gru|^{p-1-\chi}} \qquad \text{on } \, \Omega_\gamma \doteq \{u>\gamma\} \neq \emptyset.
\end{equation}
\end{proposition}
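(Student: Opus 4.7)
Proof Plan: I argue by contradiction, taking a hypothetical non-constant solution $u$ on $\Omega_\gamma$ and exploiting the superlinear source $u^\omega$ through a test function of Farina--Serrin type to force $|\gru|=0$ a.e.\ on $\Omega_\gamma$; the Carnot Lie-bracket structure and continuity then upgrade this to $u\equiv\mathrm{const}$, contradicting non-constancy. Specifically, I set $u_\gamma\doteq(u-\gamma)_+$, choose a standard cutoff $\eta$ with $\eta\equiv 1$ on $B_R$, $\supp\eta\subset B_{2R}$, $|\nabla_0\eta|\le C/R$, pick an integer $k$ to be specified, let $\zeta=\eta^k$, and test the weak formulation of \eqref{ineq_superlevel_33} with $\phi=\zeta^p u_\gamma^\alpha$, $\alpha\ge 1$ to be chosen (an approximation by a cutoff $\lambda_\varepsilon(u)$ vanishing on $\{u\le\gamma+\varepsilon\}$ renders $\phi$ genuinely in $\lip_c(\Omega_\gamma)$; the limit $\varepsilon\to 0$ passes by monotone convergence). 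Writing $S(t)\doteq t^{p-1}/\varphi(t)$, so that $S_*\doteq\inf S>0$ by (WpC), and using $u^\omega\ge u_\gamma^\omega$ on $\Omega_\gamma$ (valid as $\gamma\ge 0$ and $\omega\ge 0$), the manipulation leading to (2.11) in the proof of Theorem \ref{teo_maximum} yields the base inequality
\begin{equation*}
\alpha\!\int\!\zeta^p u_\gamma^{\alpha-1}\frac{|\gru|^p}{S(|\gru|)}+K\!\int\!(1+r)^{-\mu}u_\gamma^{\omega+\alpha}\frac{|\gru|^\chi}{S(|\gru|)}\zeta^p \le p\!\int\!\zeta^{p-1}u_\gamma^\alpha\frac{|\gru|^{p-1}}{S(|\gru|)}|\grho|.
\end{equation*}

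Denoting the two LHS integrands by $X$ and $Y$, I bound the right-hand side by the weighted three-term Young inequality $X^{s_1}Y^{s_2}Z^{s_3}\le \varepsilon X+\varepsilon Y+C(\varepsilon)Z$ with $s_1+s_2+s_3=1$, imposing the matchings $\chi s_1+ps_2=p-1$ and $\alpha s_3=\omega s_1-s_2$ to remove $|\gru|$ and $u_\gamma$ from the residual $Z$. Parametrising by $s_1$, these force $s_2=(p-1-\chi s_1)/p$, $s_3=(1-(p-\chi)s_1)/p$, and $\alpha=(s_1(p\omega+\chi)-(p-1))/(ps_3)$; the demands $s_3>0$ and $\alpha\ge 1$ reduce to the interval $s_1\in [\tfrac{1}{\omega+1},\tfrac{1}{p-\chi})$, nonempty \emph{exactly} because $\omega>p-1-\chi$. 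With these choices $Z=\zeta^{p-1/s_3}(1+r)^{\mu s_1/s_3}|\grho|^{1/s_3}/S(|\gru|)$, and since $s_3<1/p$ the exponent of $\zeta$ is negative; however the substitution $\zeta=\eta^k$ rewrites $\zeta^{p-1/s_3}|\grho|^{1/s_3}=k^{1/s_3}\eta^{kp-1/s_3}|\nabla_0\eta|^{1/s_3}$, so choosing $k\ge 1/(ps_3)$ makes the $\eta$-exponent non-negative. On the annulus $B_{2R}\setminus B_R$ one then has $Z\le (C/S_*)R^{(\mu s_1-1)/s_3}$.

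Absorbing the $\varepsilon X,\varepsilon Y$ terms into the LHS (taking $\varepsilon$ small enough) and using $\vol(B_{2R})\lesssim R^Q$ from \eqref{volgrowth}, I obtain
$$
\tfrac{K}{2}\int_{\Omega_\gamma}(1+r)^{-\mu}u_\gamma^{\omega+\alpha}\frac{|\gru|^\chi}{S(|\gru|)}\zeta^p \le C\,R^{(\mu s_1-1)/s_3+Q}.
$$
As $s_1\uparrow 1/(p-\chi)$, the right-hand exponent tends to $\mu/(p-\chi)-1$, which is strictly negative by $\mu<p-\chi$; fixing $s_1$ close enough to $1/(p-\chi)$ thus makes the right-hand side vanish as $R\to+\infty$. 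Monotone convergence ($\zeta^p\uparrow 1$) then forces $u_\gamma^{\omega+\alpha}|\gru|^\chi/S(|\gru|)\equiv 0$ a.e.\ on $\Omega_\gamma$, and since $u_\gamma>0$ and $\varphi>0$ on $\R^+$ this amounts to $|\gru|=0$ a.e.\ on $\Omega_\gamma$. Because $V_1$ Lie-generates $\mathcal{G}$, iterated commutators of the $X_j$'s annihilate $u$, so $u$ is locally constant on $\Omega_\gamma$; continuity of $u$, connectedness of $\GG$, and the defining condition $u>\gamma$ then force $u$ to be globally constant (any component of $\Omega_\gamma$ with non-empty boundary would satisfy $u=\gamma$ there by continuity, contradicting $u>\gamma$), yielding the contradiction.

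The hard part is not the algebra but the \emph{simultaneous} satisfiability: all of the matchings, the positivity $s_3>0$, the bound $\alpha\ge 1$ and the annular smallness $(\mu s_1-1)/s_3+Q<0$ rest on the same parameter $s_1$ being driven toward $1/(p-\chi)$, and this is achievable only because both assumptions $\omega>p-1-\chi$ and $\mu<p-\chi$ hold. The cutoff refinement $\zeta=\eta^k$ with $k$ large is essential, since otherwise the negative $\zeta$-exponent in $Z$ would make $\int Z$ diverge on the annulus.
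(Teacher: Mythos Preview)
Your argument is essentially the paper's own proof: both test with $(\text{cutoff})^{\text{large power}}\cdot u^\alpha$ (you use $(u-\gamma)_+^\alpha$ with an $\varepsilon$-approximation, the paper uses $\lambda(u)\,u^\alpha$ with a smooth $\lambda$), apply the same triple Young inequality with identical exponent-matching, and then push the free parameter so that the residual $R$-power becomes negative---your $s_1\uparrow 1/(p-\chi)$ is exactly the paper's $\alpha\to+\infty$ (the paper chooses the cutoff exponent $\eta=z_3=1/s_3$ directly, whereas you achieve the same with $\zeta=\eta^k$, $k\ge 1/(ps_3)$). One small slip of phrasing: the exponent $(\mu s_1-1)/s_3+Q$ actually tends to $-\infty$, not to $\mu/(p-\chi)-1$, as $s_1\uparrow 1/(p-\chi)$ (because $s_3\downarrow 0$ while $\mu s_1-1\to\mu/(p-\chi)-1<0$), but this only strengthens your conclusion.
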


\begin{proof}
As in Proposition \ref{teo_maximum}, define for convenience
$$
S(t) \doteq \frac{t^{p-1}}{\varphi(t)},
$$
and note that $S_* \doteq \inf_\GG S >0$ in view of (WpC) in \eqref{assumptions}. Suppose that $u$ is a non-constant solution of \eqref{ineq_superlevel_33} for some $\gamma \ge 0$ and $K>0$, and take $\lambda \in C^1(\R)$ such that 
$$
0 \le \lambda \le 1, \quad \lambda'\ge 0, \quad \lambda\equiv 0 \, \text{ on } \, (-\infty,\gamma], \quad  \lambda>0 \, \text{ on } \, (\gamma, \infty). 
$$
Let $\psi \in C^\infty_c(\GG)$ be a cut-off function, and let $\eta,\alpha>1$ to be specified later. We plug the non-negative test function
$$
\phi = \psi^\eta \lambda(u)u^\alpha \in \lip_c(\GG)
$$
in the weak definition of \eqref{ineq_superlevel_33}, and we use $\lambda' \ge 0$, to obtain
\begin{equation}
\begin{aligned}
\disp K\int \psi^\eta \lambda \frac{u^{\alpha+\omega}}{(1+r)^\mu} \frac{|\gru|^\chi}{S(|\gru|)} & \le  - \disp \int \frac{\varphi(|\gru|)}{|\gru|} \langle \gru, \nabla_0(\psi^\eta \lambda u^\alpha) \rangle \\
& \le \disp \eta \int \psi^{\eta-1} \lambda u^\alpha \varphi(|\gru|)|\nabla_0 \psi| \\
 &\qquad -\alpha \int \psi^\eta \lambda u^{\alpha-1} \varphi(|\gru|) |\gru|.
\end{aligned}
\end{equation}
Hence, by \eqref{def_Gteo}, 
\begin{equation}\label{beginning}
\begin{aligned}
\disp K \int \psi^\eta \lambda \frac{u^{\alpha+\omega}}{(1+r)^\mu}\frac{|\gru|^\chi}{S(|\gru|)} & \le  \disp \eta \int \psi^{\eta-1} \lambda u^\alpha \frac{|\gru|^{p-1}}{S(|\gru|)}|\nabla_0\psi|\\
&\qquad- \alpha \int \psi^\eta \lambda u^{\alpha-1} \frac{|\gru|^{p}}{S(|\gru|)}.
\end{aligned}
\end{equation}
By \eqref{assu_musigmachi_33}, for each $\alpha>1$ it holds
\begin{equation}\label{alphadecente}
(p-1)(\alpha + \omega) > \chi\alpha.
\end{equation}
As in Lemma  2.2 in \cite{farinaserrin}, we now use the triple Young inequality to the first term in the right-hand side of \eqref{beginning}: we need to find $z_1,z_2,z_3>1$ satisfying 
\begin{equation}\label{expoholder}
\frac{1}{z_1} + \frac{1}{z_2} + \frac{1}{z_3} = 1
\end{equation}
and $\tau,\bar C>0$ such that
\begin{equation}\label{desejavel}
\psi^{\eta-1} \lambda u^\alpha \frac{|\gru|^{p-1}}{S(|\gru|)}|\nabla_0\psi| = \mathcal{J}_1^{\frac{1}{z_1}}\mathcal{J}_2^{\frac{1}{z_2}}\mathcal{J}_3^{\frac{1}{z_3}},
\end{equation}
with 
\begin{equation}\label{choices}
\begin{array}{lcl}
\mathcal{J}_1 & = & \disp \frac{K}{2\eta} \psi^\eta \lambda \frac{u^{\alpha + \omega}}{(1+r)^\mu} \frac{|\gru|^\chi}{S(|\gru|)} \\[0.5cm]
\mathcal{J}_2 & = & \disp \frac{\alpha}{\eta} \psi^\eta \lambda u^{\alpha-1} \frac{|\gru|^{p}}{S(|\gru|)} \\[0.5cm]
\mathcal{J}_3 & = & \disp \frac{\bar C}{\eta} (1+r)^{\tau} \frac{|\nabla_0 \psi|^{z_3}}{S(|\gru|)}.
\end{array}
\end{equation}
considering powers of $u$, $|\gru|$, $r$ and $\psi$, to obtain \eqref{desejavel} we need the following balancing:
$$
\begin{array}{rllrll}
i) & \text{powers of $u$:} & \ \disp \alpha = \frac{\alpha + \omega}{z_1} + \frac{\alpha-1}{z_2} & \quad ii) & \text{powers of $|\gru|$:} & \ \disp p-1 = \frac{\chi}{z_1} + \frac{p}{z_2} \\[0.5cm]
iii) & \text{powers of $r$:} &  \ \disp 0 = - \frac{\mu}{z_1} + \frac{\tau}{z_3} & \quad iv) & \text{powers of $\psi$:} & \ \disp \eta -1 = \frac{\eta}{z_1} + \frac{\eta}{z_2}. 
\end{array}
$$
Solving the first two equations with respect to $z_1$ and $z_2$, and then recovering $z_3$ from \eqref{expoholder}, we get
$$
\begin{array}{c}
\disp \frac{1}{z_1} = \frac{\alpha + p-1}{p(\alpha + \omega)-\chi(\alpha-1)}, \qquad \disp \frac{1}{z_2} = \frac{(p-1)(\alpha+\omega) - \chi \alpha}{p(\alpha+\omega) -\chi(\alpha-1)}, \\[0.5cm]
\disp \qquad \frac{1}{z_3} = \frac{\omega-(p-\chi-1)}{p(\alpha+\omega)-\chi(\alpha-1)}
\end{array}
$$
(these are positive numbers because of \eqref{alphadecente} and the Keller-Osserman condition $\omega > p-\chi-1$), and from the last two equations,
$$
\tau = \mu \frac{z_3}{z_1} = \mu \frac{\alpha+p-1}{\omega-(p-\chi-1)}, \qquad \eta = z_3.  
$$
The constant $\bar C$ is then uniquely determined by \eqref{desejavel}. Having found the right parameters, and since the triple Young inequality reads as
$$
\mathcal{J}_1^{\frac{1}{z_1}}\mathcal{J}_2^{\frac{1}{z_2}}\mathcal{J}_3^{\frac{1}{z_3}} \le \mathcal{J}_1 + \mathcal{J}_2 + \mathcal{J}_3,
$$
from \eqref{desejavel} and \eqref{choices} we deduce
$$
\begin{array}{lcl}
\disp \eta \psi^{\eta-1} \lambda u^\alpha \frac{|\gru|^{p-1}}{S(|\gru|)}|\nabla_0\psi| & \le & \disp \frac{K}{2} \psi^\eta \lambda \frac{u^{\alpha + \omega}}{(1+r)^\mu} \frac{|\gru|^\chi}{S(|\gru|)} + \alpha \psi^\eta \lambda u^{\alpha-1} \frac{|\gru|^{p}}{S(|\gru|)} \\[0.5cm]
& & + \disp \bar C (1+r)^{\mu \frac{z_3}{z_1}} \frac{|\nabla_0 \psi|^{z_3}}{S(|\gru|)}.
\end{array}
$$
Inserting into \eqref{beginning} and using that $S_* >0$ we get
\begin{equation}\label{beginning_3}
\begin{aligned}
\disp \frac{K}{2} \int \psi^\eta \lambda \frac{u^{\alpha+\omega}}{(1+r)^\mu}\frac{|\gru|^\chi}{S(|\gru|)} & \le \frac{\bar C}{S_*} \int  (1+r)^{\mu \frac{z_3}{z_1}} |\nabla_0 \psi|^{z_3}. 
\end{aligned}
\end{equation}
For large $R>0$, we choose as $\psi \in C^\infty_c(\GG)$ a cut-off function satisfying
$$
0 \le \psi \le 1, \quad \psi \equiv 1 \ \ \text{ on } \, B_R, \quad \psi \equiv 0 \ \ \text{ on } \, \GG\backslash B_{2R}, \quad |\nabla_0\psi| \le \frac{C}{R},
$$
for an absolute constant $C$. Since $\lambda=0$ when $u \le \gamma$, from \eqref{beginning_3} and \eqref{volgrowth} we obtain
\begin{equation}\label{farise}
\begin{array}{lcl}
\disp \frac{K}{2} \int_{B_R \cap \Omega_\gamma} \lambda \frac{u^{\alpha+\omega}}{(1+r)^\mu}\frac{|\gru|^\chi}{S(|\gru|)} & \le & \disp \frac{K}{2} \int \psi^\eta \lambda \frac{u^{\alpha+\omega}}{(1+r)^\mu}\frac{|\gru|^\chi}{S(|\gru|)} \\[0.5cm]
& \le & \disp \frac{\bar C}{S_*} \int_{B_{2R}} (1+r)^{\mu \frac{z_3}{z_1}} \left(\frac{C}{R}\right)^{z_3} \\[0.5cm]
& \le & \hat C R^{\mu \frac{z_3}{z_1} - z_3 + Q},
\end{array}
\end{equation}
The exponent of $R$ in \eqref{farise} can be written as 
$$
\mu \frac{z_3}{z_1} - z_3 + Q = \frac{\mu(\alpha+p-1) - p(\alpha+\omega) + \chi(\alpha-1) + Q(\omega-p+\chi+1)}{\omega-(p-\chi-1)},
$$
which is negative provided that
\begin{equation}\label{mau}
\alpha (\mu-p+\chi) < -Q(\omega-p+\chi+1) -\mu(p-1) +p\omega +\chi.
\end{equation}
Since, in our assumptions, $\mu < p-\chi$, for $\alpha$ large enough we can guarantee inequality \eqref{mau}. Fixing one such $\alpha$ and letting $R \ra +\infty$ in \eqref{farise}, from $K>0$ we deduce
\begin{equation}\label{attheend!}
\int_{\Omega_\gamma} \lambda \frac{u^{\alpha+\omega}}{(1+r)^\mu}\frac{|\gru|^\chi}{S(|\gru|)} \equiv 0. 
\end{equation}
However, since $\Omega_\gamma \neq \emptyset$, $\lambda>0$ on $(\gamma,+\infty)$, and $u$ is positive and non-constant on $\Omega_\gamma$, the integral in \eqref{attheend!} is strictly positive, a contradiction. 
\end{proof}

\subsection*{Proof of Theorem \ref{teo_main}}

The case when $u$ is constant has already been discussed in Remark \ref{rem_constants}, hence we just need to consider non-constant solutions. Fix $\gamma>0$ such that $f(t) \ge Ct^\omega$ on $[\gamma, +\infty)$. If $u^*= +\infty$, then by \eqref{assu_main} $u$ would be a non-constant solution of 
$$
\Delta_\GG^\varphi u \ge K(1+r)^{-\mu}u^\omega\frac{\varphi(|\gru|)}{|\gru|^{p-1-\chi}}  \qquad \text{on } \, \Omega_\gamma,
$$
for some $K>0$, which contradicts Proposition \ref{lem_importante}. Therefore, $u$ is bounded above. To prove that $f(u^*) \le 0$, suppose by contradiction that $f(u^*)>0$, and fix $\gamma<u^*$ close enough to $u^*$ in such a way that 
$$
f(u) \ge \frac{f(u^*)}{2} \doteq C_0 \qquad \text{on } \, \Omega_\gamma.
$$
Then, again by \eqref{assu_main} $u$ is a non-constant solution of 
\begin{equation}\label{sertao}
\Delta_\GG^\varphi u \ge K C_0(1+r)^{-\mu}\frac{\varphi(|\gru|)}{|\gru|^{p-1-\chi}}  \qquad \text{on } \, \Omega_\gamma,
\end{equation}
for some $K>0$. Since the above equation is invariant by translation, set $u_s \doteq u+s$, $s \in \R$. Up to choosing $s$ and $\gamma$ appropriately, we can ensure that   
$$
0< (u_s^*)^\omega < C_0, \qquad \gamma = 0.
$$
Inserting into \eqref{sertao}, we obtain that $u_s$ is a non-constant solution of 
$$
\Delta_\GG^\varphi u_s \ge K u_s^\omega(1+r)^{-\mu}\frac{\varphi(|\nabla_0u_s|)}{|\nabla_0u_s|^{p-1-\chi}}  \qquad \text{on } \, \{u_s>0\},
$$
again contradicting Proposition \ref{lem_importante}. Hence, $f(u^*) \le 0$. If $u$ solves \eqref{main_case} with the equality sign, and $f$ satisfies \eqref{equazione_fmain_simme}, then we can apply the first part of the proof both to $u$ and to $-u$, and proceeding as in the proof of Theorem \ref{teo_main_2} we obtain $u_* > -\infty$ and $f(u_*) \ge 0$, as required.

\subsection*{Sharpness of Proposition \ref{lem_importante} and Theorem \ref{teo_main}}

As we did for Proposition \ref{teo_maximum}, we prove the sharpness in the special case of the mean curvature operator in $\R^Q$, $Q \ge 2$. Thus, we want to exhibit unbounded solutions of 
\begin{equation}\label{aim}
\diver\left( \frac{\nabla u}{\sqrt{1+|\nabla u|^2}}\right) \ge C(1+r)^{-\mu} u^\omega \frac{|\nabla u|^\chi}{\sqrt{1+|\nabla u|^2}}
\end{equation}
on some superlevel set $\Omega_\gamma \doteq \{u>\gamma\}$, $\gamma>0$, outside of the range \eqref{pararange}, which for $p=2$ becomes 
	\begin{equation}\label{pararange_meancurv}
	0 \le \chi \le 1, \qquad  \mu<2-\chi, \qquad \omega>1-\chi.
	\end{equation}
Once we have an unbounded solution of \eqref{aim} on some $\Omega_\gamma$ with $\gamma>0$, we can easily produce a solution $v$ on the whole $\R^Q$ that does not satisfy the $L^ \infty$-estimate of Theorem \ref{teo_main}: first, choose $f \in C^0(\R)$ satisfying 
$$
f\equiv 0 \ \text{ on } \, (-\infty, 2\gamma), \quad f(t) \le t^\omega \ \text{ for } \, t \in [2\gamma, 3\gamma], \quad f(t) = t^\omega \quad \text{ for } t \ge 3\gamma, 	
$$	
and note that $u$ solves
\begin{equation}\label{eeerrr}
\diver\left( \frac{\nabla u}{\sqrt{1+|\nabla u|^2}}\right) \ge C(1+r)^{-\mu} f(u) \frac{|\nabla u|^\chi}{\sqrt{1+|\nabla u|^2}} \qquad \text{on } \, \{ u>2\gamma\}.
\end{equation}
Now, since $f(2\gamma)=0$, the constant function $2\gamma$ solves \eqref{eeerrr} with the equality sign on the whole $\R^Q$. By the pasting lemma (Proposition \ref{lem_pasting} below), the function
$$
v \doteq \left\{ \begin{array}{ll}
u & \quad \text{on } \, \{u>2\gamma\} \\[0.2cm]
2\gamma & \quad \text{otherwise,}
\end{array}\right.
$$
solves 
$$
\diver\left( \frac{\nabla v}{\sqrt{1+|\nabla v|^2}}\right) \ge C(1+r)^{-\mu} f(v) \frac{|\nabla v|^\chi}{\sqrt{1+|\nabla v|^2}} \qquad \text{on } \, \R^ Q.
$$
Being $v$ unbounded, it gives our desired counterexample.\\[0.2cm]
\noindent \emph{Counterexample in the range}
$$
0 \le \chi \le 1, \qquad \mu < 2-\chi, \qquad \omega < 1-\chi.
$$	
We consider the same function as the one used to show the sharpness of Proposition \ref{teo_maximum}, that is, $u(x) = h(r^2(x))$ with $h(t) = (1+t)^{\sigma/2}$ and some $\sigma>0$ to be determined. Since $Q \ge 2$, by \eqref{semplice!}
\begin{equation}\label{unun}
\diver\left( \frac{\nabla u}{\sqrt{1+|\nabla u|^2}}\right) \ge \frac{C(1+r)^{\sigma-2}}{\sqrt{1+|\nabla u|^2}} \qquad \text{on } \, \R^Q,
\end{equation}
for some constant $C>0$. Moreover, by the definition of $u$, 
\begin{equation}\label{theend}
u^\omega |\nabla u|^\chi(1+r)^{-\mu} \le C_1(1+r)^{\sigma \omega + \chi(\sigma-1)-\mu}.
\end{equation}
Define for convenience $\delta>0$ according to the identity $\omega = 1-\chi-\delta$. The inequality 
\begin{equation}\label{ineccetera}
\sigma\omega +\chi(\sigma-1)-\mu \le \sigma-2 
\end{equation}
is satisfied if and only if
\begin{equation}\label{sigmagrande}
\sigma \delta \ge 2-\chi-\mu,
\end{equation}
which holds provided $\sigma$ is large enough. Hence, if $\sigma$ is sufficiently large, from \eqref{unun}, \eqref{theend} and \eqref{ineccetera} we obtain that $u$ is an unbounded solution of
\begin{equation}\label{cisiamo!!!}
\diver\left( \frac{\nabla u}{\sqrt{1+|\nabla u|^2}}\right) \ge C_8(1+r)^{-\mu} u^\omega \frac{|\nabla u|^\chi}{\sqrt{1+|\nabla u|^2}} \qquad \text{on } \, \R^Q.
\end{equation}
Note that we admit the possibility that $\omega<0$.\\[0.2cm]
\noindent \emph{Counterexample in the range}
$$
0 \le \chi \le 1, \qquad \mu \ge 2-\chi, \qquad \omega = 1-\chi.
$$	
We still use the above counterexample: it is enough to observe that \eqref{ineccetera}, equivalently \eqref{sigmagrande}, is met for $\omega=1-\chi$ (i.e. $\delta =0$) if and only if $2-\chi -\mu \le 0$. Note that, in this case, there is no condition on $\sigma$ besides $\sigma>0$.\\[0.2cm] 
\noindent \emph{Counterexample in the range}
\begin{equation}\label{thirdrange_2}
0 \le \chi < 1, \qquad \mu < 2-\chi, \qquad \omega = 1-\chi.
\end{equation}
We define
\begin{equation}\label{def_sigma_dinuovo}
\sigma \doteq \frac{2-\chi-\mu}{1-\chi} >0.
\end{equation}
To get an unbounded solution of \eqref{cisiamo!!!} on some superlevel set we now consider $u(x)= h(r^2(x))$ with $h(t) = \exp\{(1+t)^{\sigma/2}\}$. Since $h' \ge 0$ and 
$$
\left|\frac{4r^2h''}{1+4r^2(h')^2}\right| \le C_0 \qquad \text{on } \, \R^+,
$$
for some explicit constant $C_0=C_0(\sigma,Q)$, then by \eqref{cumputa}
\begin{equation}\label{cumputa_2}
\begin{array}{l}
\disp \diver\left( \frac{\nabla u}{\sqrt{1+|\nabla u|^2}}\right) \ge \frac{1}{\sqrt{1+|\nabla u|^2}}\left[ 2(Q-1)h' -C_0\right] \\[0.5cm]
\quad \disp = \frac{1}{\sqrt{1+|\nabla u|^2}}\left[ \sigma(Q-1)h(1+r^2)^{\frac{\sigma-2}{2}} -C_0\right]  \ge \frac{C_1h}{\sqrt{1+|\nabla u|^2}}(1+r)^{\sigma-2},
\end{array}
\end{equation}
for a suitable $C_1>0$, and provided that we choose a high enough superlevel set $\{u>\gamma\}$. Since $\omega = 1-\chi$,
$$
u^\omega |\nabla u|^\chi = h^{1-\chi} \sigma^\chi \left[\frac{r}{\sqrt{1+r^2}}\right]^\chi (1+r^2)^{\frac{\chi(\sigma-1)}{2}}h^\chi \le C_3 h (1+r)^{\chi(\sigma-1)}. 
$$
Substituting into \eqref{cumputa_2} in place of the term $h$, we infer the existence of a constant $C>0$ such that
\begin{equation}\label{cumputa_3}
\begin{array}{lcl}
\disp \diver\left( \frac{\nabla u}{\sqrt{1+|\nabla u|^2}}\right) & \ge & \disp C(1+r)^{\sigma-2 - \chi(\sigma-1)}u^{\omega}\frac{|\nabla u|^\chi}{\sqrt{1+|\nabla u|^2}} \\[0.5cm]
& = & \disp C(1+r)^{-\mu}u^{\omega}\frac{|\nabla u|^\chi}{\sqrt{1+|\nabla u|^2}} \qquad \text{on } \, \Omega_\gamma,
\end{array}
\end{equation}
where the last equality follows from our definition of $\sigma$ in \eqref{def_sigma_dinuovo}. Therefore, $u$ is the desired counterexample for \eqref{thirdrange_2}. Observe that the need for an exponential growing $u$ is supported by Theorem 12 $(i)$ in \cite{farinaserrin}: indeed, applying their result to the mean curvature operator, with $p=2$ and $f$ satisfying $tf(t) \ge C|t|^{\omega+1}$ on $\R$, in the range \eqref{thirdrange_2} any solution with polynomial growth of 
\begin{equation}
\diver\left( \frac{\nabla u}{\sqrt{1+|\nabla u|^2}}\right) = C(1+r)^{-\mu}f(u)|\nabla u|^\chi \qquad \text{on } \, \R^n
\end{equation}
must be identically zero.\\
The bounds in \eqref{def_sigma_dinuovo} do not cover the case $\chi=1$, and we now conclude by commenting on the following
\begin{question}
What about possible counterexamples in the range
$$
\chi=1, \qquad \mu<2-\chi =1, \qquad \omega= 1-\chi =0?
$$
\end{question}
There is some evidence that, in the above range, Theorem \ref{teo_main} might still hold. Indeed, if we try to produce unbounded solutions of 
\begin{equation}\label{climax}
\diver\left( \frac{\nabla u}{\sqrt{1+|\nabla u|^2}}\right) \ge K(1+r)^{-\mu}\frac{|\nabla u|}{\sqrt{1+|\nabla u|^2}} \qquad \text{on } \, \Omega_\gamma
\end{equation}
of the type $u = h(r^2)$ with $h' \ge 0$, then using \eqref{cumputa} we would get
\begin{equation}\label{cumputa_final}
\disp \diver\left( \frac{\nabla u}{\sqrt{1+|\nabla u|^2}}\right) \left\{ \begin{array}{l} \disp \ge \frac{1}{\sqrt{1+|\nabla u|^2}} \left[ 2(Q-1)h' + \frac{4r^2h''}{1+4r^2(h')^2}\right] \\[0.5cm]
\disp \le \frac{1}{\sqrt{1+|\nabla u|^2}} \left[ 2Qh' + \frac{4r^2h''}{1+4r^2(h')^2}\right].
\end{array}\right.
\end{equation}
Being $|\nabla u| = 2rh'$, if the quotient with $h''$ in square brackets in \eqref{cumputa_final} is at most of the order of $h'$ as $r \ra +\infty$, then we deduce
$$
\diver\left( \frac{\nabla u}{\sqrt{1+|\nabla u|^2}}\right) \le \frac{Ch'}{\sqrt{1+|\nabla u|^2}} \le \frac{C}{r}\frac{|\nabla u|}{\sqrt{1+|\nabla u|^2}}.
$$
In this case, since $\mu<1$, $u$ can never solve \eqref{climax}. Computation \eqref{cumputa_final} suggests to search for $h$ satisfying
$$
\frac{4r^2h''(r^2)}{1+4r^2\big(h'(r^2)\big)^2} \ge \frac{C}{(1+r)^\mu} |\nabla u| \approx C(1+r)^{1-\mu}h'(r^2) \qquad \text{for large } \, r.
$$
However, it can be easily seen that the differential inequality
$$
\frac{4th''(t)}{1+4t\big(h'(t)\big)^2} \ge (1+t)^{\frac{1-\mu}{2}}h'(t) 
$$
possesses no increasing solutions defined on $[T, +\infty)$.

\section{Appendix}
In this Appendix we give a proof of the pasting lemma for locally Lipschitz solutions of inequalities of the type
\begin{equation}\label{eqB_2}
\Delta^\varphi_\GG u \ge B(x,u,\nabla_0u)
\end{equation}
(as $T\GG$ is parallelizable, we identify it with $\GG \times \R^n$, $n$ being the topological dimension of $\GG$). We assume that $\varphi$ satisfies
\begin{equation}\label{ipo_varphi_app}
\varphi \in C^0(\R^+_0), \qquad \varphi \ge 0 \quad \text{on } \, \R^+, 
\end{equation} 
and that $B : \GG \times \R \times \R^n$ enjoys the following properties:
\begin{equation}\label{ipo_B_app}
\begin{array}{l}
B \in L^\infty_\loc(\GG \times \R \times \R^n); \\[0.2cm] 
\text{for a.e. $x \in \GG$, $B(x, \cdot,\cdot)$ is continuous on  $\R\times \R^n$.}
\end{array}
\end{equation}
We recall that $v \in \lip_\loc(\Omega)$ is a solution of \eqref{eqB_2} if and only if 
		\begin{equation}\label{wB}
		\int_{\Omega}\left\{\frac{\varphi(\left|\nabla_0v\right|)}{\left|\nabla_0v\right|}\langle \nabla_0v,\nabla_0\phi \rangle +B(x,v,\nabla_0v)\phi\right\}\leq 0\,,
		\end{equation}
	for each $\phi\in C_0^{\infty}(\Omega)$, $\phi\geq0$ in $\Omega$. By approximation, the class of test functions for \eqref{wB} can be enlarged to $\phi \in \lip_c(\GG)$.

	\begin{lemma}\label{lemle}
	Let $\varphi,B$ satisfy \eqref{ipo_varphi_app}, \eqref{ipo_B_app}, fix an open set $\Omega \subset \GG$, and let $u_1$, $u_2\in \lip_\loc(\Omega)$ be solutions of
	\begin{equation}\label{eqB}
	\Delta_{\GG}^\varphi v \ge B(x,v,\nabla_0 v) \quad\hbox{on $\Omega$}\,.
	\end{equation}
	If none of $u_1$, $u_2$ is constant, assume further that
	\begin{equation}\label{ipo_addicional}
	\varphi(t) \quad \text{is non-decreasing on $\R^+$.}
	\end{equation}
	Then $u\doteq \max\left\{u_1, u_2\right\}\in \lip_\loc(\Omega)$ also solves \eqref{eqB} on $\Omega$.
	\end{lemma}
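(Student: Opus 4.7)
The plan is to regularise the maximum via a smooth approximation of the Heaviside step function. First I would fix a non-negative test function $\phi \in \lip_c(\Omega)$ and a family $\{\psi_\eps\}_{\eps>0} \subset C^\infty(\R)$ with $\psi_\eps(t)=0$ for $t \le 0$, $\psi_\eps(t)=1$ for $t \ge \eps$, and $\psi_\eps' \ge 0$. Setting $\eta_\eps \doteq \psi_\eps(u_1-u_2) \in \lip_\loc(\Omega)$, both $\phi\eta_\eps$ and $\phi(1-\eta_\eps)$ are non-negative, compactly supported, Lipschitz functions, hence admissible in \eqref{wB} for $u_1$ and $u_2$ respectively. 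Plugging them in and summing the two resulting inequalities produces, after expanding $\nabla_0(\phi\eta_\eps)$ and $\nabla_0(\phi(1-\eta_\eps))$, a sum of three contributions with total $\le 0$: a term carrying the convex combination of $\langle A(\nabla_0 u_i),\nabla_0\phi\rangle$ (with $A(\rho)\doteq \varphi(|\rho|)\rho/|\rho|$), a ``middle'' term with factor $\phi\,\psi_\eps'(u_1-u_2)\,\langle A(\nabla_0 u_1)-A(\nabla_0 u_2),\,\nabla_0 u_1-\nabla_0 u_2\rangle$, and the corresponding convex combination of the $B$-terms.

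The crucial step is to show that the middle term is non-negative. A direct use of Cauchy-Schwarz yields the elementary pointwise bound
$$
\langle A(a)-A(b),\ a-b\rangle\ \ge\ \bigl(\varphi(|a|)-\varphi(|b|)\bigr)\bigl(|a|-|b|\bigr) \qquad \forall\, a,b \in \R^n,
$$
which is non-negative precisely when $\varphi$ is non-decreasing; this is exactly assumption \eqref{ipo_addicional}. The dichotomy in the statement is explained by inspection of the borderline case: if, say, $u_1\equiv c$, then $\nabla_0 u_1=0$ and the middle integrand collapses to $\varphi(|\nabla_0 u_2|)|\nabla_0 u_2|\ge 0$ without invoking any monotonicity, so \eqref{ipo_addicional} is only needed when neither $u_i$ is constant.

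Once the non-negative middle term is discarded, I would let $\eps\ra 0$ in the remaining integrals. Dominated convergence applies since all integrands are bounded on $\supp\phi$: the horizontal gradients $\nabla_0 u_i$ are in $L^\infty_\loc$ because $u_i \in \lip_\loc$, while $\varphi$ and $B$ are locally bounded by \eqref{ipo_varphi_app}--\eqref{ipo_B_app}. The cut-off $\eta_\eps$ converges a.e.\ to $\chi_{\{u_1>u_2\}}$, and the standard Stampacchia-type fact that $\nabla_0 u_1=\nabla_0 u_2$ a.e.\ on $\{u_1=u_2\}$ ensures that $\nabla_0 u$ agrees with $\nabla_0 u_1$ on $\{u_1\ge u_2\}$ and with $\nabla_0 u_2$ on $\{u_1<u_2\}$ almost everywhere; the limit integrals therefore reassemble exactly into the weak formulation \eqref{wB} for $u=\max\{u_1,u_2\}$. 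The main obstacle is precisely the monotonicity step: the rest is careful bookkeeping, but without \eqref{ipo_addicional} the middle term could have either sign and the whole argument would collapse.
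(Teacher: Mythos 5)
Your proof is correct and follows essentially the same route as the paper's: split a test function via a smooth approximation of the indicator of $\{u_1>u_2\}$, sum the weak formulations for $u_1$ and $u_2$, discard the non-negative "cross'' term using the Cauchy--Schwarz estimate $\langle A(a)-A(b),a-b\rangle\ge(\varphi(|a|)-\varphi(|b|))(|a|-|b|)$ together with \eqref{ipo_addicional} (or $\varphi\ge 0$ when one $u_i$ is constant), and pass to the limit with dominated convergence and Stampacchia's theorem on $\{u_1=u_2\}$. The only difference is cosmetic: the paper indexes the approximating family by $n\to\infty$ via $\gamma_n(u_2-u_1)$ rather than by $\eps\to 0$.
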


\begin{remark}
\emph{Observe that no monotonicity is required on $B(x,t,X)$ in the variable $t$, nor there are sign assumptions on $u_1,u_2$.
}
\end{remark}	
	
		\begin{proof}
		First of all we recall that $u=\frac{1}{2}\left\{\left(u_1+u_2\right)+\left|u_1-u_2\right|\right\}$. Thus, by Stampacchia's theorem (see Lemma 7.7 in \cite{GT}), 
			\begin{equation}
			\nabla_0 u=
				\begin{cases}
				\nabla_0 u_1 & \hbox{on $\Omega_1:=\left\{x\in\Omega : u_1>u_2\right\}$}\\
				\nabla_0 u_1=\nabla_0 u_2 & \hbox{on $\Omega_0:=\left\{x\in\Omega : u_1=u_2\right\}$}\\
				\nabla_0 u_2 & \hbox{on $\Omega_2:=\left\{x\in\Omega : u_1<u_2\right\}$}\,,
				\end{cases}
			\end{equation}
		from which it follows
			\begin{equation}
			B(x,u,\nabla_0 u)=B(x,u_1,\nabla_0 u_1)\chi_{\Omega\backslash \Omega_2}+B(x,u_2,\nabla_0 u_2)\chi_{\Omega_2}\,.
			\end{equation}
		To prove that $u$ is a solution of \eqref{eqB} we proceed as in \cite{Le}: consider 
			\begin{equation}								\begin{cases}
			\gamma:\R\rightarrow [0,1] \,,\quad\gamma\in C^{\infty}(\R)\\
			\gamma^{\prime}\geq 0\quad\hbox{on $\R$}\\
			\gamma(t)=0\,\, \hbox{if $t\leq0$},\quad \gamma(t)=1\,\, \hbox{if $t\geq1$}\,
			\end{cases}
			\end{equation}
	and, for $n\in\N$ and $t\in\R$, define $\gamma_n(t) \doteq \gamma(nt)$. Note that $\gamma_n(t)\nearrow\chi_{\R^+}(t)$ as $n\rightarrow+\infty$.	Consider a test function $0 \le \phi\in C^{\infty}_0(\Omega)$, and define $\phi_1, \phi_2 \in \lip_c(U)$ as being $\phi_1=\left[1-\gamma_n(u_2-u_1)\right]\phi$ and $\phi_2=\left[\gamma_n(u_2-u_1)\right]\phi$. By approximation, $\phi_1,\phi_2$ are admissible test functions, and clearly $\gamma_n(u_2-u_1) \ra \chi_{\Omega_2}$ pointwise. 
	Since both the $u_i$ are solutions of \eqref{eqB}, from \eqref{wB} we deduce
		\beq
		\int_{\Omega}\left\{\frac{\varphi(\left|\nabla_0u_i\right|)}{\left|\nabla_0u_i\right|}\langle \nabla_0u_i,\nabla_0\phi_i \rangle +B(x,u_i,\nabla_0u_i)\phi_i\right\}\leq 0\,,
		\eeq
	thus, adding the two inequalities, computing $\nabla_0\phi_i$ and rearranging, we get
		\beq\label{int0}
		\begin{aligned}
		0\geq & \int_{\Omega}\gamma_n^{\prime}\phi\langle \left[\frac{\varphi(\left|\nabla_0u_2\right|)}{\left|\nabla_0u_2\right|}\nabla_0u_2-\frac{\varphi(\left|\nabla_0u_1\right|)}{\left|\nabla_0u_1\right|}\nabla_0u_1\right],\nabla_0(u_2-u_1)\rangle \\
		& + \int_{\Omega}\gamma_n\langle\left[\frac{\varphi(\left|\nabla_0u_2\right|)}{\left|\nabla_0u_2\right|}\nabla_0u_2-\frac{\varphi(\left|\nabla_0u_1\right|)}{\left|\nabla_0u_1\right|}\nabla_0u_1\right],\nabla_0\phi \rangle\\
		& + \int_{\Omega}\left[B(x,u_2,\nabla_0u_2)-B(x,u_1,\nabla_0u_1)\right]\gamma_n\phi\\
		& + \int_{\Omega}\left\{\frac{\varphi(\left|\nabla_0u_1\right|)}{\left|\nabla_0u_1\right|}\langle\nabla_0u_1,\nabla_0\phi \rangle+B(x,u_1,\nabla_0u_1)\phi\right\}\,.
		\end{aligned}
		\eeq
	Using Cauchy-Schwarz inequality and $\gamma_n' \ge 0$, the first integral satisfy
		\beq\label{int1}
		\begin{aligned}
		& \int_{\Omega}\gamma_n^{\prime}\phi\langle \left[\frac{\varphi(\left|\nabla_0u_2\right|)}{\left|\nabla_0u_2\right|}\nabla_0u_2-\frac{\varphi(\left|\nabla_0u_1\right|)}{\left|\nabla_0u_1\right|}\nabla_0u_1\right],\nabla_0\left(u_2-u_1\right) \rangle\\
		& \quad\geq
		\int_{\Omega}\gamma_n^{\prime}\phi\left[\varphi(\left|\nabla_0u_2\right|)-\varphi(\left|\nabla_0u_1\right|)\right]\left(\left|\nabla_0u_2\right|-\left|\nabla_0u_1\right|\right).
        \end{aligned}
		\eeq
    The product 
    $$
    \left[\varphi(\left|\nabla_0u_2\right|)-\varphi(\left|\nabla_0u_1\right|)\right]\left(\left|\nabla_0u_2\right|-\left|\nabla_0u_1\right|\right)
    $$ 
    is non-negative because of \eqref{ipo_addicional} (if $u_1,u_2$ are both non-constant) or \eqref{ipo_varphi_app} (if one of them is constant). Hence, the first term in the right-hand side of \eqref{int0} is non-negative and can be thrown away. Letting then $n \ra +\infty$ in \eqref{int0} and using Lebesgue convergence theorem, we deduce  
		\beq
		\begin{aligned}
		0 & \geq \int_{\Omega_2}\langle \left[\frac{\varphi(\left|\nabla_0u_2\right|)}{\left|\nabla_0u_2\right|}\nabla_0u_2-\frac{\varphi(\left|\nabla_0u_1\right|)}{\left|\nabla_0u_1\right|}\nabla_0u_1\right],\nabla_0\phi \rangle\\
		&\quad\quad+\int_{\Omega_2}\left[B(x,u_2,\nabla_0u_2)-B(x,u_1,\nabla_0u_1)\right]\phi\\
		&\quad\quad+\int_{\Omega}\left\{\frac{\varphi(\left|\nabla_0u_1\right|)}{\left|\nabla_0u_1\right|}\langle\nabla_0u_1,\nabla_0\phi \rangle +B(x,u_1,\nabla_0u_1)\phi\right\}\\
		&= \int_{\Omega_2}\left\{\frac{\varphi(\left|\nabla_0u_2\right|)}{\left|\nabla_0u_2\right|}\langle\nabla_0u_2,\nabla_0\phi \rangle+B(x,u_2,\nabla_0u_2)\phi\right\}\\
		&\quad\quad+\int_{\Omega\setminus\Omega_2}\left\{\frac{\varphi(\left|\nabla_0u_1\right|)}{\left|\nabla_0u_1\right|}\langle \nabla_0u_1,\nabla_0\phi \rangle +B(x,u_1,\nabla_0u_1)\phi\right\}\\
		& = \int_{\Omega}\left\{\frac{\varphi(\left|\nabla_0u\right|)}{\left|\nabla_0u\right|}\langle \nabla_0u,\nabla_0\phi\rangle +B(x,u,\nabla_0u)\phi\right\}\,.
		\end{aligned}
		\eeq
	Since $\phi$ is an arbitrary test function, this concludes the proof of the lemma.
		\end{proof}

The pasting lemma below is a useful refinement of the above result.
		
	\begin{proposition}[The pasting lemma]\label{lem_pasting}
	Let $\Omega\subset\Omega^{\prime}\subset\GG$ be open domains, and suppose that $\varphi$ satisfies \eqref{ipo_varphi}. Let $u\in \lip_\loc(\Omega)$ and $u^{\prime}\in \lip_\loc(\Omega^{\prime})$ be such that
		\beq
		\begin{cases}
		\Delta_{\GG}^\varphi u \geq B(x,u,\nabla_0 u) &\hbox{on $\Omega$}\\
		\Delta_{\GG}^\varphi u^{\prime} \geq B(x,u^{\prime},\nabla_0 u^{\prime}) &\hbox{on $\Omega^{\prime}$}\\
		u^{\prime}(x)\geq u(x)&\hbox{on $\Omega^\prime \cap \partial\Omega$}\,.
		\end{cases}
		\eeq
	Then, the function $v\in \lip_\loc(\Omega^{\prime})$ defined as
		\beq
		v=
			\begin{cases}
			\max\left\{u,u^{\prime}\right\}&\hbox{on $\Omega$}\\
			u^{\prime}& \hbox{on $\Omega^{\prime}\setminus\Omega$}\,,
			\end{cases}
		\eeq
	satisfies
		\beq
		\Delta_{\GG}^\varphi v \geq B(x,v,\nabla_0 v) \quad\quad\hbox{on $\Omega^{\prime}$}\,.
		\eeq
	\end{proposition}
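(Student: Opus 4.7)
The plan is to reduce the statement to two facts already in hand: by Lemma \ref{lemle} applied on $\Omega$ to the pair $u,\,u'|_{\Omega}$ (both in $\lip_\loc(\Omega)$), the function $w \doteq \max\{u, u'\}$ is a weak solution of \eqref{eqB} on $\Omega$; and $u'$ is a weak solution of \eqref{eqB} on $\Omega'$. Note $v = w$ on $\Omega$ and $v = u'$ on $\Omega' \setminus \Omega$. The first task is to check that $v \in \lip_\loc(\Omega')$. The only non-trivial point is continuity across $\partial \Omega \cap \Omega'$: for $x \in \partial \Omega \cap \Omega'$ and $x_k \in \Omega$ with $x_k \to x$, the boundary hypothesis $u' \ge u$ at $x$ forces $\max\{u(x_k), u'(x_k)\} \to u'(x) = v(x)$; a local covering argument then upgrades this to local Lipschitz regularity.

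Next, fix $0 \le \phi \in C_c^\infty(\Omega')$. Borrowing the regularizing scheme from the proof of Lemma \ref{lemle}, let $\gamma \in C^\infty(\R,[0,1])$ be non-decreasing with $\gamma \equiv 0$ on $(-\infty, 0]$ and $\gamma \equiv 1$ on $[1, +\infty)$, and define the shifted cutoff
\[
\eta_n \doteq \gamma\bigl(n(v - u') - 1\bigr),
\]
so that $\eta_n = 0$ on $\{v - u' \le 1/n\}$ and $\eta_n = 1$ on $\{v - u' \ge 2/n\}$. Because $v - u' = 0$ on $(\Omega' \setminus \Omega) \cup (\partial \Omega \cap \Omega')$ and $v$ is continuous, the set $\{v - u' \ge 1/n\}$ is closed in $\Omega'$ and contained in $\Omega$; intersecting with $\supp \phi$ yields a compact subset of $\Omega$, hence $\phi \eta_n \in \lip_c(\Omega)$ while $\phi(1 - \eta_n) \in \lip_c(\Omega')$. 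Testing the inequality for $w$ on $\Omega$ against $\phi \eta_n$ and the inequality for $u'$ on $\Omega'$ against $\phi(1-\eta_n)$ and summing yields
\[
\mathcal{I}_n + J_n \le 0,
\]
where $\mathcal{I}_n$ gathers the terms carrying $\nabla_0 \phi$ together with the $B\phi$ contributions, and $J_n$ is the cross-term produced by $\nabla_0 \eta_n$.

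As $n \to +\infty$, $\eta_n \to \chi_{\{v > u'\}}$ pointwise a.e.; by Lebesgue's dominated convergence theorem (using local boundedness of $\nabla_0 u$, $\nabla_0 u'$, $B$, together with $\nabla_0 v = \nabla_0 w$ on $\Omega$ by definition and $\nabla_0 v = \nabla_0 u'$ a.e. on $\{v = u'\}$ by Stampacchia's theorem applied to $v - u' = (u-u')^+$ on $\Omega$), $\mathcal{I}_n$ converges to
\[
\int_{\Omega'} \frac{\varphi(|\nabla_0 v|)}{|\nabla_0 v|} \langle \nabla_0 v, \nabla_0 \phi\rangle + \int_{\Omega'} B(x, v, \nabla_0 v) \phi,
\]
which is exactly the quantity to be bounded by $0$. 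The main obstacle is the cross-term
\[
J_n = \int_{\Omega} \phi \, n\gamma'\bigl(n(v-u')-1\bigr) \left[ \frac{\varphi(|\nabla_0 w|)}{|\nabla_0 w|}\langle \nabla_0 w, \nabla_0 w - \nabla_0 u'\rangle - \frac{\varphi(|\nabla_0 u'|)}{|\nabla_0 u'|}\langle \nabla_0 u', \nabla_0 w - \nabla_0 u'\rangle \right],
\]
supported in $\{v - u' \in (1/n, 2/n)\} \subset \{u > u'\}$ (where $w = u$), and the $\gamma'$ factor is of order $n$ on its support, so smallness of support alone does not kill it. The resolution is the Cauchy--Schwarz trick already used in Lemma \ref{lemle}: expanding the bracket and applying $\langle \nabla_0 u, \nabla_0 u'\rangle \le |\nabla_0 u||\nabla_0 u'|$ gives the pointwise lower bound $(|\nabla_0 u| - |\nabla_0 u'|)(\varphi(|\nabla_0 u|) - \varphi(|\nabla_0 u'|))$, which is non-negative under the same convention as in Lemma \ref{lemle} (monotonicity of $\varphi$ when both $u$, $u'$ are non-constant, and $\varphi(0)=0$ otherwise). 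Therefore $J_n \ge 0$ and can be dropped, so $\mathcal{I}_n \le 0$ for every $n$, which in the limit yields the weak inequality for $v$ on $\Omega'$.
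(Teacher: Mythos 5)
Your argument is correct, but it takes a genuinely different route from the paper's. The paper proceeds in two stages: first it treats the \emph{strict} case $u'>u$ on $\Omega'\cap\partial\Omega$, where by continuity $v\equiv u'$ on an open neighborhood $U\supset(\Omega'\setminus\Omega)\cup(\Omega'\cap\partial\Omega)$, so $v$ solves \eqref{eqB} on $U$ trivially and on $\Omega$ by Lemma \ref{lemle}, and the two weak inequalities glue over $\Omega'=\Omega\cup U$ by a partition of unity (the $\nabla_0$-derivatives of the partition cancel, so no cross-term appears). The non-strict case is then reduced to the strict one by replacing $u$ with $u_\eps=u-\eps$ and $B$ with $B_\eps=\min\{B(x,t,X),B(x,t-\eps,X)\}$, obtaining $v_\eps$, and passing to the limit $\eps\to0$ via dominated convergence and Stampacchia. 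Your proof instead attacks the non-strict case directly in one shot: after obtaining $w=\max\{u,u'\}$ from Lemma \ref{lemle}, you introduce the shifted cutoff $\eta_n=\gamma(n(v-u')-1)$ whose support $\{v-u'\ge 1/n\}$ is a \emph{compact} subset of $\Omega$ when intersected with $\supp\phi$ (this is the crucial gain of the shift over the unshifted $\gamma(n\cdot)$), test with $\phi\eta_n$ on $\Omega$ and $\phi(1-\eta_n)$ on $\Omega'$, discard the cross-term $J_n$ by the same Cauchy--Schwarz/monotonicity argument used inside Lemma \ref{lemle}, and let $n\to\infty$. The trade-off: your route avoids the $B_\eps$-approximation and the $\eps$-limit entirely, which is cleaner; in exchange it uses the Cauchy--Schwarz/monotonicity estimate a second time (the paper's partition-of-unity gluing has no cross-term, so it needs that estimate only inside Lemma \ref{lemle}). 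Since both proofs already invoke Lemma \ref{lemle}, both implicitly require the same dichotomy (either $\varphi$ non-decreasing, or one of $u,u'$ constant) that the proposition's statement does not list explicitly; you correctly flag this. Two small inaccuracies: the constant case in Lemma \ref{lemle} rests on $\varphi\ge0$ from \eqref{ipo_varphi_app} (together with the usual convention $\varphi(0)\cdot 0/0=0$), not on $\varphi(0)=0$; and the continuity of $v$ across $\Omega'\cap\partial\Omega$, which you sketch, actually requires the boundary inequality $u'\ge u$ also for approaches to $\partial\Omega$ from inside $\Omega$ — this is implicit in the paper's statement $v\in\lip_\loc(\Omega')$ and is fine, but worth noting that it is an assertion rather than automatic.
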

		\begin{proof}
		We first suppose that $u^{\prime}(x)>u(x)$ on $\Omega' \cap \partial\Omega$. By continuity of $u$ and $u^{\prime}$, $u^{\prime}(x)>u(x)$ still holds on some open set $V \subset \Omega'$ containing $\Omega' \cap \partial \Omega$. This implies that $v=u^{\prime}$ on $U \doteq (\Omega'\backslash \Omega) \cup V$ and thus
			\beq
			\Delta_{\GG}^\varphi v \geq B(x,v,\nabla_0 v) \quad\quad\hbox{on $U$}\,.
			\eeq 
		From Lemma \ref{lemle} it follows that
			\beq
			\Delta_{\GG}^\varphi v \geq B(x,v,\nabla_0 v) \quad\quad\hbox{on $\Omega$}\,.
			\eeq 
		Since $\Omega^{\prime}=\Omega \cup U$ the proposition is proved.\\
		To deal with the general case, for $\eps>0$ we set $u_{\eps}(x) \doteq u(x)-\eps$, and we define
		\begin{equation}
		B_\eps(x,t,X) = \min\left\{ B(x,t,X), B(x,t-\eps,X)\right\}.
		\end{equation}
		Then, clearly 
		\beq
		\begin{cases}
		\Delta_{\GG}^\varphi u_\eps \geq B_\eps(x,u_\eps,\nabla_0 u_\eps) &\hbox{on $\Omega$}\\
		\Delta_{\GG}^\varphi u^{\prime} \geq B_\eps(x,u^{\prime},\nabla_0 u^{\prime}) &\hbox{on $\Omega^{\prime}$}\\
		u^{\prime}(x)> u_\eps(x)&\hbox{on $\Omega^\prime \cap \partial\Omega$}\,.
		\end{cases}
		\eeq
		By the first part of the proof, the function
		\beq
		v_{\eps}=
			\begin{cases}
			\max\left\{u_{\eps},u^{\prime}\right\}&\hbox{on $\Omega$}\\
			u^{\prime}& \hbox{on $\Omega^{\prime}\setminus\Omega$}\,,
			\end{cases}
		\eeq
		solves
			\beq
			\Delta_{\GG}^\varphi v_{\eps} \geq B_\eps(x,v_{\eps},\nabla_0 v_{\eps}) \quad\quad\hbox{on $\Omega^{\prime}$}\,.
			\eeq
		Thus, for any test function $\phi\in C^{\infty}_0(\Omega^{\prime})$, $\phi\ge 0$ we have that
			\beq
			\begin{aligned}\label{finaleequaz}
			0&\geq\int_{\Omega^{\prime}}\left\{\frac{\varphi(\left|\nabla_0v_{\eps}\right|)}{\left|\nabla_0v_{\eps}\right|}\langle \nabla_0v_{\eps},\nabla_0\phi \rangle +B_\eps(x,v_{\eps},\nabla_0v_{\eps})\phi\right\}\\
			\end{aligned}
			\eeq
		To conclude, we need to check that the terms in \eqref{finaleequaz} appropriately converge as $\eps \ra 0$. By the definition of $B_\eps$, $v_\eps,v$, the continuity of $B(x, \cdot, \cdot)$ and Lebesgue theorem,
			 \begin{equation}
		     \begin{array}{l}
		     \disp \int \phi\left| B_\eps(x,v_\eps, \nabla_0 v_\eps) - B(x,v,\nabla_0 v)\right| \\[0.5cm]
		     \quad \disp \le \int \phi\Big[\left| B_\eps(x,v_\eps, \nabla_0 v_\eps) - B(x, v_\eps, \nabla_0 v_\eps)\right| + \left| B(x,v_\eps, \nabla_0 v_\eps) - B(x, v, \nabla_0 v)\right|\Big] \\[0.5cm]
		     \quad \disp \le \int \phi\left| B(x,v_\eps- \eps, \nabla_0 v_\eps) - B(x, v_\eps, \nabla_0 v_\eps)\right| \\[0.5cm]
		     \qquad + \disp \int_{\Omega \cap \{u \in [u^\prime,u'+\eps]\}} \phi \left| B(x,u^\prime, \nabla_0 u^\prime) - B(x, u, \nabla_0 u)\right| \\[0.5cm]
		     \quad \disp \ra \int_{\Omega \cap \{u'= u\}} \phi \left| B(x,u^\prime, \nabla_0 u^\prime) - B(x, u, \nabla_0 u)\right| = 0,
		     \end{array}
             \end{equation}		
			as $\eps \ra 0$, where the last equality follows from Stampacchia's theorem. In a similar way,
              \begin{equation}
		     \begin{array}{l}
		     \disp \int \left|\frac{\varphi(\left|\nabla_0v_{\eps}\right|)}{\left|\nabla_0v_{\eps}\right|}\nabla_0v_{\eps}-\frac{\varphi(\left|\nabla_0v\right|)}{\left|\nabla_0v\right|} \nabla_0v\right| |\nabla_0\phi| \\[0.5cm]
		     \qquad \disp = \int_{\Omega \cap \{u \in [u^\prime, u^\prime +\eps]\}} \left|\frac{\varphi(\left|\nabla_0u^\prime\right|)}{\left|\nabla_0u^\prime\right|}\nabla_0u^\prime-\frac{\varphi(\left|\nabla_0u\right|)}{\left|\nabla_0u\right|}\nabla_0u\right| |\nabla_0\phi| \\[0.5cm]
		     \qquad \ra \disp \int_{\Omega \cap \{u^\prime =u\}} \left|\frac{\varphi(\left|\nabla_0u^\prime\right|)}{\left|\nabla_0u^\prime\right|}\nabla_0u^\prime-\frac{\varphi(\left|\nabla_0u\right|)}{\left|\nabla_0u\right|}\nabla_0u\right| |\nabla_0\phi| = 0.
     	     \end{array}
             \end{equation}
             Letting $\eps \ra 0$ in \eqref{finaleequaz} we thus get
             \beq
			\begin{aligned}
			0&\geq\int_{\Omega^{\prime}}\left\{\frac{\varphi(\left|\nabla_0v\right|)}{\left|\nabla_0v\right|}\langle \nabla_0v,\nabla_0\phi \rangle +B(x,v,\nabla_0v)\phi\right\},
			\end{aligned}
			\eeq
			which was to be proved.
		\end{proof}
\section*{Acknowledgements}
The second author is supported by the grant PRONEX - N\'ucleo de An\'alise Geom\'etrica e Aplicac\~oes. Proceso n. PR2-0054-00009.01.00/11.\\
The authors would like to thank the referee for his/her careful reading, and for valuable suggestions that helped to improve our results.
	
\bibliographystyle{amsplain}
\bibliography{bibliohardy_heisenberg}

\end{document}